\let\oldcap\cap
\let\cap\oldcap
\pgfplotsset{compat=1.18}
\newcommand{\cupdot}{\mathbin{\mathaccent\cdot\cup}}
\numberwithin{equation}{section}
\theoremstyle{plain}
\newtheorem{theorem}{Theorem}[section]
\newtheorem{lemma}[theorem]{Lemma}
\newtheorem{corollary}[theorem]{Corollary}
\newtheorem{mainxx}[theorem]{Main result}
\theoremstyle{definition}
\newtheorem{definition}[theorem]{Definition}
\theoremstyle{remark}
\newtheorem{remark}[theorem]{Remark}
\DeclareMathOperator*{\argmin}{argmin}
\tikzstyle{stuff_fill}=[rectangle,draw,fill=pink,minimum size=0.5em]
\newcounter{arrow}
\title{Average Kernel Sizes \\-\\ Computable Sharp Accuracy Bounds for Inverse Problems\thanks{
NMG, DI and JG acknowledge the funding provided by the German Aerospace Center from the department EO Data Science at the Earth Observation Center. This work was supported by the Helmholtz Association's Initiative and Networking Fund on the HAICORE (at) FZJ partition.
}}
\author{Nina M.~Gottschling
  \thanks{German Aerospace Center (DLR), Remote Sensing Technology Institute, Wessling, Germany.}
\and David Iagaru
    \footnotemark[2]
    \thanks{Department of Applied Mathematics, École Polytechnique, Paris, France.}
  \and Jakob Gawlikowski
    \footnotemark[2]
\and Ioannis Sgouralis
  \thanks{Department of Mathematics, University of Tennessee, Knoxville, Tennessee.}}
\renewenvironment*{displayquote}
  {\begingroup\setlength{\leftmargini}{0.6cm}\csq@getcargs{\csq@bdquote{}{}}}
  {\csq@edquote\endgroup}
\begin{document}

\maketitle

\begin{abstract}
Reconstructing unknown quantities from noisy measurements is a central challenge in applied sciences, including medical imaging, radar, and astronomy. These inverse problems are often ill-posed, implying a fundamental tradeoff between reconstruction accuracy and stability [Gottschling et al., \textit{SIAM Review}, 67.1 (2025), pp. 73-104] and [Colbrook et al., \textit{Proceedings of the National Academy of Sciences}, 119.12 (2022), pp. e2107151119]. Consequently, all stable approximate inverse maps exhibit nonzero accuracy limits for ill-posed inverse problems. While many approaches have been developed to design approximate inverse maps, ranging from optimization-based and Bayesian methods to deep learning, there are currently no computable method-independent bounds on the optimal achievable reconstruction accuracy. In this work, we derive computable sharp accuracy bounds for inverse problems that depend only on the dataset of signals, the forward model, and the noise model. Computing these bounds requires the ability to generate or approximate sets of solutions and corresponding measurements. Our framework enables estimating achievable reconstruction accuracy before designing or optimizing inverse maps. 
Under additional measurability and sampling assumptions, we show that the computable bounds converge to their measure-theoretic counterparts and that the lower bound, one half of the average kernel size, forms an asymptotic probabilistic accuracy barrier: no approximate inverse map can asymptotically achieve a lower reconstruction error while maintaining convergence in probability of empirical to generalization error. To facilitate practical use, we provide an algorithmic framework and software library for computing the bounds, and demonstrate the approach on fluorescence localization microscopy and multi-spectral satellite image super-resolution. The proposed bounds provide both theoretical and empirical insight into the fundamental accuracy limits of inverse problems and support the principled design of datasets and forward models.
\end{abstract}

\paragraph*{Keywords}
Inverse problems, optimization, accuracy bounds, uncertainty, imaging, deep learning 

\section{Introduction}

In recent years, the intersection of artificial intelligence and inverse problems has revolutionized various scientific fields. 
Inverse problems, at their core, involve reconstructing an unknown signal, such as an image or features of an image, from indirect or incomplete measurements. This task is often ill-posed and computationally challenging \cite{bertero2021introduction}. Ill-posed finite dimensional inverse problems are ubiquitous in the computational sciences as they appear in multiple applications. An incomplete list of examples includes most types of computational imaging \cite{adcock2021compressive, bouman2022foundations, barrett2013foundations}, matrix completion \cite{candes2012exact}, parametric partial differential equations and system identification \cite{adcock2022sparse}, phase retrieval \cite{shechtman2015phase, candes2013phaselift, fannjiang2020numerics}, and quantized sampling \cite{4558487}. Examples of applications are radar inverse scattering \cite{borden2001mathematical} or astronomy \cite{craig1986inverse}. Traditional reconstruction methods, such as regularization inverse techniques \cite{tikhonov1963solution} or compressed sensing \cite{FoucartRauhutCSbook}, have achieved significant success. However, emerging AI and deep learning workflows have ushered in a new era of methods. AI-based methods and combinations of optimization-based iterative and AI methods, such as \cite{colbrook2022difficulty}, have enabled remarkable improvements in reconstruction quality and speed in various inverse problems. These include, but are not limited to, the inverse problems of Magnetic Resonance Imaging (MRI) \cite{fastmri19,Bo-18}, microscopy \cite{hoff21}, and computed tomography (CT) \cite{wang2020deep}. For example, deep neural networks, trained on large datasets, can learn complex data-driven reconstruction methods that outperform classical methods in MRI \cite{hammernik2018learning}. 

\subsection{What are accuracy limits in benchmark challenges?\nopunct}

Despite the expressive power and universal approximation properties of deep neural networks \cite{hornik1989multilayer,maiorov1999lower,bach2017breaking,beck2022full,petersen2018optimal,yarotsky2017error}, state-of-the-art methods for inverse problems often achieve remarkably similar reconstruction errors on benchmark datasets. Such accuracy plateaus have been observed across a range of inverse problems, including MRI reconstruction, tomography, phase retrieval, inverse scattering, and scientific imaging benchmarks \cite{fastmri19,fastmri20,fastmridata,crafts2025benchmarking,zheng2025inversebench}. This raises the question: what are the intrinsic dataset- and problem-dependent accuracy limits for inverse problems? In theory, deep neural networks are universal approximators that obtain vanishing errors. However, as the authors of \cite{adcock2021gap} point out, there is a gap between theoretical guarantees and the practical training and design of such networks. This observed gap between theory and practice even further motivates the question of what inverse problem- and dataset-dependent accuracy limits are. Existing theoretical results establish that stable reconstruction methods on measurable metric spaces necessarily exhibit nonzero reconstruction error for ill-posed inverse problems \cite{plaskota1996noisy,gottschling2023existence}. However, it remains open whether such limits admit computable finite-sample bounds that have favourable convergence properties for general datasets  \cite{burger2024learning}. The present work addresses this question by deriving computable method-independent accuracy bounds from finite datasets and inverse problems. Under additional measurability and sampling assumptions, we show that no approximate inverse map can simultaneously achieve reconstruction error below this threshold and obtain convergence in probability of the empirical to the generalization error. 

\section{Overview of the paper}

In this section, we present a brief overview of the paper. We formalize the problem studied, introduce three key challenges in the related literature, and give a summary of our main result.

\subsection{Problem outline}

\noindent We base our problem outline on the general pipeline of solving inverse problems using data-driven models \cite{Arr19}, where a dataset for training, testing and validation is available. Following \cite{gottschling2023existence}, we express an inverse problem using the following general framework:
\begin{align}\label{eq:sampling10}
	\text{recover } x \in \mathcal{M}_1 \subset \mathcal{X} \text{ given a point } y = F(x,e)\in \mathcal{Y} \text{ of } x  \text{ and }  e \in \mathcal{E}\subset \mathcal{Z}.
\end{align}
Here, $x$ represents the signal of interest, while $e$ represents the noise. The sets \noindent $\mathcal{M}_1$ and $\mathcal{E}$ describe the signals of interest and noise, respectively, and may be finite, but we do not assume this. The function $F \colon \mathcal{M}_1\times \mathcal{E} \to \mathcal{M}_2 \subset \mathcal{Y}$ which describes the forward model such that $\mathcal{M}_2 = F(\mathcal{M}_1 \times \mathcal{E})$ and is deliberately kept general to encompass many known models. The aim of solving the inverse problem is to produce an approximation or set of approximations $\hat{x}$ to the true solution $x$. Any algorithm or method that yields an approximate inverse map from noisy measurements in the set $\mathcal{M}_2$ to an approximation to the true solution $x$, is denoted by $\phi: \mathcal{M}_2 \rightarrow \mathcal{X}$ with $\phi(y) = \hat{x}$. Some examples are approximate inverse maps obtained from optimization-based inverse methods, such as compressed sensing \cite{FoucartRauhutCSbook} or total variation minimization \cite{rudin1992nonlinear,huang2009new}, combinations of optimization and data-driven inverse methods \cite{Arr19,borcea2023waveform}, and (un-)supervised Deep Learning methods \cite{Str-18,bel19,Bo-18}. A common choice for optimization-based approximate inverse map is to add a weighted regularization term $\alpha\mathcal{R}(\cdot)$ to the optimization objective, which often takes the following form
\begin{align}\label{eq:regsolvers}
\phi_{\alpha}(y) = \hat{x} \in \argmin_{z \in \mathcal{X}}\left(d_{\mathcal{Y}}(y,F(z,0))+\alpha \mathcal{R}(z)\right), \quad \text{ for } \alpha \in (0,\infty),
\end{align}
where $d_{\mathcal{Y}}$ is a metric on $\mathcal{Y}$, that is often induced by a norm \cite{Arr19}. A comprehensive mathematical review of data-driven methods for solving inverse problems is provided in \cite{Arr19}. The reconstruction error of an approximate inverse map $\phi: \mathcal{M}_2 \rightarrow \mathcal{X}$ for approximately solving \eqref{eq:sampling10} is often quantified by the empirical expectation of the residual of the reconstruction and solutions of the $M \in \mathbb{N}$ samples $\mathcal{D}_M=\{(x_m,y_m)\}_{m=1}^M = \{(x_m,F(x_m,e_m))\}_{m=1}^M \subset \mathcal{M}_1 \times \mathcal{M}_2$, and can be denoted by
\begin{align}\label{eq:empiricalerror}
    \mathcal{L}(\mathcal{D}_M, \phi,p) = \left(\frac{1}{M} \sum_{m=1}^M d_{\mathcal{X}}(x_m,\phi(y_m))^p\right)^{1/p},
\end{align}
\noindent for some $p \in [1,\infty)$ and some (pseudo) metric $d_{\mathcal{X}}$ on $\mathcal{X}$. For $p=1$ this is the mean absolute error, and for $p=2$ this is the root mean squared error (RMSE) and is also known as empirical risk or root mean squared loss, which are commonly used to evaluate the accuracy of models and approximate inverse maps, see, e.g., [\cite{bickel2015mathematical}, Chapter 1] and [\cite{murphy2022probabilistic}, Chapter 1]. A common objective in inverse problems for applications in various fields, is to find an approximate inverse map that minimizes the loss \eqref{eq:empiricalerror} in a given class of functions, such as finding the optimal architecture and parameter values for deep neural networks \cite{cgenzel2022solving,Arr19}. The \textit{accuracy limit} $c_{\mathrm{lim}}(\mathcal{D}_M,p)$ as a lower bound to the reconstruction error of any approximate inverse map $\phi:\mathcal{M}_2\rightarrow \mathcal{X}$ satisfies 
\begin{align}\label{eq:empiricalerrorlimit}
   c_{\mathrm{lim}}(\mathcal{D}_M,p):=\inf_{\theta} \mathcal{L}(\mathcal{D}_M, \theta,p) \leq \mathcal{L}(\mathcal{D}_M, \phi,p),
\end{align}
\noindent where we take the infimum over all functions $\theta :\mathcal{M}_2\rightarrow \mathcal{X}$ and $c_{\mathrm{lim}}(\mathcal{D}_M,p)$ is non-zero for datasets $\mathcal{D}_M$, such that $(x_{m'},F(x_{m'},e_{m'})),(x_m,F(x_m,e_m)) \in \mathcal{D}_M$ with $x_m\neq x_{m'}$ and $F(x_{m},e_{m})=F(x_{m'},e_{m'})$. Such functions are kept as general as possible and do not necessarily need to be continuous. The accuracy limit in \eqref{eq:empiricalerrorlimit} in theory can be obtained by devising and computing infinitely many approximate inverse maps to solve \eqref{eq:sampling10}. As this is impossible in practice, \emph{the aim in this work is to find a data-driven bound for \eqref{eq:empiricalerrorlimit} that is independent of any special function class.} 

\subsection{Challenges for computing solutions to inverse problems}

\textit{(i) No unique solution:} In real-world scenarios, the measurements $y \in \mathcal{M}_2$ in \eqref{eq:sampling10} may only be partially available and corrupted by errors due to equipment constraints \cite{burger2024learning}. As a result, the same measurement can result from more than one unique signal $x \in \mathcal{M}_1$ and an instance of noise $e \in \mathcal{E}$. In many instances of inverse problems of interest given a measurement, there is no unique solution. For example, this occurs if the problem of recovering $x$ given $y$ in \eqref{eq:sampling10} is ill-posed or undersampled, unless further assumptions are made, such as the robust null space property or the restricted isometry property \cite{CoDaDe-08,FoucartRauhutCSbook}. If the forward model $F$ is poorly conditioned or dimensionality reducing, then, given a measurement, there may not exist a unique solution. This means that for some $y \in \mathcal{M}_2$ the first component of the pre-image of the measurement $y$ under $F$ contains more than one element, 
\begin{align}\label{eq:centralassumption}
    \exists x,x' \in \pi_1(F^{-1}(y))= \{x \in \mathcal{M}_1: \exists e \in \mathcal{E} \text{ s.t } F(x,e) = y\}, \text{ such that } x\neq x',
\end{align}
\noindent where $\pi_1: \mathcal{M}_1 \times \mathcal{E} \rightarrow \mathcal{M}_1$ is the projection onto the first component $\pi_1(x,e) =x$ for $(x,e) \in \mathcal{M}_1\times \mathcal{E}$. \eqref{eq:centralassumption} can be true if $\mathcal{M}_1 \subset \mathbb{C}^{d_1}$ is countable, countably finite, infinite, or uncountable. \newline


\noindent \textit{(ii) Existence of accuracy bounds:} For inverse problems where there are multiple solutions for one measurement, as in \eqref{eq:centralassumption}, we can only obtain approximate inverse maps $\phi: \mathcal{M}_2 \rightarrow \mathcal{X}$. This is equivalent to only being able to obtain a non-zero reconstruction error for inverse maps. In \cite{gottschling2025troublesome} we establish an accuracy-stability trade-off for approximate inverse maps that solve ill-posed inverse problems. In \cite{gottschling2025troublesome,colbrook2022difficulty} we show that approximate inverse maps with smaller error and, thus, higher accuracy are prone to instabilities and incorrect transfer of details. This accuracy-stability trade-off implies that there exists a constant $K(F,\mathcal{M}_1,\mathcal{E},p)>0$ depending on $F$, $\mathcal{M}_1$, $\mathcal{E}$ and $p$ that is independent of any approximate inverse map $\phi: \mathcal{M}_2 \rightarrow \mathbb{C}^{d_1}$ such that
\begin{align}\label{eq:lowerboundlip}
K(F,\mathcal{M}_1,\mathcal{E},p) \leq \mathcal{L}(\mathcal{D}_M,\phi,p),
\end{align}
\noindent for datasets $\mathcal{D}_M$, such that $(x_{m'},y_m),(x_m,y_m) \in \mathcal{D}_M$ with $x_m\neq x_{m'}$, and all stable, Lipschitz continuous, $\phi: \mathcal{M}_2 \rightarrow \mathcal{X}$, where Lipschitz continuity is well-defined on metric spaces. For optimal inverse maps which are measurable, previous works have provided upper and lower bounds to the average and worst-case reconstruction error \cite{gottschling2023existence,plaskota1996noisy}. By providing quantifiable bounds to the reconstruction error of optimal inverse maps, this answers the question posed by M.~Burger and T.~Roith in \textit{Learning in Image Reconstruction: A Cautionary Tale} \cite{burger2024learning},

\begin{displayquote}
   \textit{``Can we ever quantify the reconstruction error [of ill-posed inverse problems]?''}
\end{displayquote}

\noindent However, the provided average bounds in \cite{gottschling2023existence,plaskota1996noisy} are not computable and Monte Carlo approximations may not preserve the inequalities and this is challenging in applications.\newline

\noindent \textit{(iii) Universal approximators do not evade accuracy bounds:} In theory, neural networks are universal approximators and have shown great success in obtaining a vanishing approximation error \cite{hornik1989multilayer,maiorov1999lower,bach2017breaking,beck2022full,petersen2018optimal,yarotsky2017error}. As the authors of \cite{adcock2021gap} point out, such approximation results typically do not provide theoretical guarantees on trainability. In a benchmark test in different realistic inverse problems \cite{zheng2025inversebench}, an accuracy-efficiency trade-off for diffusion models as approximate inverse maps is established. Accuracy bounds to the best worst-case reconstruction error are established by some existing frameworks. For example, for noiseless linear inverse problems, there are the Gelfand widths \cite{pinkus2012n} and the best $k$-term approximation \cite{CoDaDe-08}. Worst-case accuracy bounds for set-valued decoders on Banach spaces have been derived in \cite{arestov1986optimal} and then extended to metric spaces in \cite{magaril1991optimal}. For inverse problems with noise, optimal learning \cite{binev2022optimal} can provide such worst-case accuracy bounds. For linear inverse problems, the notion of generalized instance optimality \cite{fundamental14} provides point-wise upper bounds on the reconstruction error. Worst- and average- case accuracy bounds on measurable metric spaces are established in \cite{gottschling2023existence,plaskota1996noisy}.

\subsection{Contributions}

In this work, we address the challenges that were previously mentioned for computing solutions to inverse problems. Challenge \textit{$(i)$ - that of inverse problems \eqref{eq:sampling10} that have no unique solution -} is a necessary assumption for our main result in \eqref{eq:centralassumption}, that is satisfied if the forward model of \eqref{eq:sampling10} is not injective on the product set of signals and noise $\mathcal{M}_1\times \mathcal{E}$. Due to the non-injectivity of the forward model, there exist measurements $y \in \mathcal{M}_2$ such that the inverse problem \eqref{eq:sampling10} does not have a unique solution. From this the challenge \textit{$(ii)$ - that there exist accuracy bounds -} follows. This leads to the challenge \textit{$(iii)$ - that universal approximators do not evade the accuracy bounds}. To address this, we develop a framework to compute accuracy bounds for the reconstruction error of inverse problems. We provide computable lower and upper accuracy bounds to the empirical reconstruction error in \eqref{eq:empiricalerror} of any inverse map solving an inverse problem \eqref{eq:sampling10}. We now present a nontechnical summary of our main result and refer to Section \ref{sec:main} for the formal statement. The accuracy bounds for any approximate inverse map in \eqref{eq:lowerboundlip} can be computed with the average kernel size. Informally, the average kernel size is the average distance between samples of the $x$-components of  $(x,e), (x',e') \in\mathcal{M}_1\times \mathcal{E}$ that the forward model maps to the same measurement $y=F(x,e) = F(x',e')$ over all the measurement samples. 

\begin{mainxx}[{Accuracy Bounds for Computing Solutions to Inverse Problems - Theorem \ref{thm:lowerboundapprx}}]
Let $p \in [1,\infty)$, then for an inverse problem as in \eqref{eq:sampling10}, we have that half the average kernel size is a sharp lower bound to the reconstruction error of any approximate inverse map. Additionally, the average kernel size is a sharp upper bound to the reconstruction error of approximate inverse maps that attain the accuracy limit in \eqref{eq:empiricalerrorlimit}.
\end{mainxx}

\noindent The average kernel size in Theorem \ref{thm:lowerboundapprx} provides sharp accuracy bounds that depend only on samples from the sets $\mathcal{M}_1$ and $\mathcal{E}$, the forward model $F$ and the evaluation (pseudo) metric for the reconstruction error in \eqref{eq:empiricalerror}. For linear $F$ the bounds depend on its kernel, and the kernel size generalizes the concept of kernel to non-linear forward models. In Section \ref{sec:applications} to demonstrate the applicability of our results and accompanying software library, we apply the framework to two examples of inverse problems with non-injective forward models. As a characteristic example for an inverse problem with a non-injective and highly non-linear forward model, we use fluorescence localization microscopy. For a large-scale inverse problem with a linear and non-injective forward model, we take super-resolution of multi-spectral satellite data, which is a common large scale problem in geoscience \cite{quattrochi1997scale}. 

\noindent The novelty of Theorem \ref{thm:lowerboundapprx} lies in establishing a finite-sample computable lower bound that preserves the inequality exactly. In contrast to the average-case bounds in \cite[Theorem 3.9]{gottschling2023existence}, Theorem \ref{thm:lowerboundapprx} is a geometric-analytic statement that applies to arbitrary approximate inverse maps under non-injectivity of the forward model and does not rely on measurability assumptions or i.i.d. Monte Carlo sampling. By contrast, numerical approximation of the average kernel size in \cite[Definition 3.8]{gottschling2023existence} does not in general preserve the corresponding reconstruction inequality at finite sample level due to sampling and approximation errors. Theorem \ref{thm:lowerboundapprx} therefore complements the measure-theoretic framework of \cite{gottschling2023existence}: whereas \cite{gottschling2023existence} establishes asymptotic bounds on measurable metric spaces, this work provides computable finite-sample bounds that remain valid independently of any probabilistic sampling model. In Section \ref{sec:additionalassconv}, we impose additional measurability, compactness, and sampling assumptions to connect the finite-sample framework of Theorem \ref{thm:lowerboundapprx} with the asymptotic measure-theoretic setting of \cite[Theorem 3.9]{gottschling2023existence}. Theorem \ref{thm:convavgalgnon} shows that the computable average kernel size converges in probability to the corresponding average kernel size on the underlying measure space as the number of i.i.d. samples increases. Furthermore, Lemma \ref{lem:avererrconv} shows that the empirical reconstruction error of measurable approximate inverse maps converges in probability to the corresponding generalization error. Together with the sharp lower and upper reconstruction bounds from Theorem \ref{thm:lowerboundapprx} and \cite[Theorem 3.9]{gottschling2023existence}, these convergence results imply the following corollary.

\begin{mainxx}[{Asymptotically Necessary Condition for Convergence in Probability of the Empirical Error to the Generalization Error - Corollary \ref{thm:finitesampleconsistency}}]
Assume that the empirical reconstruction error converges in probability to the corresponding generalization error as the number of samples increases. Then the finite-sample lower bound from Theorem \ref{thm:lowerboundapprx} becomes an asymptotically necessary condition for this convergence: for every sufficiently small $\varepsilon>0$, the probability that the empirical reconstruction error falls below one half of the computable average kernel size minus $\varepsilon$ converges to zero as the sample sizes tend to infinity.
\end{mainxx}

\subsection{Outlook and impact}

The lower bound to the reconstruction error can be computed before designing and engineering approximate inverse maps, such as training deep neural networks or optimization methods. This is because the bounds only depend on the forward model $F$, and the sets $\mathcal{M}_1$ and $\mathcal{E}$, together with the ability to sample or approximate sets of solutions for selected measurements, i.e. by using simulation-based generation, posterior sampling, or analytic shortcuts in structured settings. The provided accuracy bounds can give researchers and practitioners insight into model development and understanding and significantly increase computational efficiency by elucidating possible accuracy measures. As the bounds can be computed for a given dataset, provided feasible-set sampling is available, the accuracy bounds can be used to assess the quality of datasets to train approximate inverse maps.

\subsection{Outline} The remainder of this paper is as follows. In Section \ref{sec:main} we introduce the computational framework for inverse problems and corresponding forward problems. In Section \ref{sec:alg} we introduce the algorithms for computing the accuracy bounds in Section \ref{sec:mainthm}. In Sections \ref{sec:micro} and \ref{sec:satsr} we describe the forward problems for localization microscopy and super-resolution of multi-spectral satellite data and present numerical results validating our theoretical results. In Section \ref{sec:additionalassconv} we provide convergence results. Additional information and proofs of main results in this paper are provided in the supplementary material.  The library accompanying the computational framework is available at the following \href{https://github.com/nm19000/AccuracyBounds}{GitHub repository}.

\section{Main results}\label{sec:main}
In this section, we define a rigorous framework for inverse problems and present the corresponding main theoretical results. Firstly, Section \ref{sec:preliminaries} formulates inverse problems, secondly, Section \ref{sec:alg} introduces an algorithm for computing the lower bound and thirdly, Section \ref{sec:mainthm} establishes inequalities using the lower bound provided by the introduced algorithm. The proofs of the main results are referred to Section \textbf{SM} $1$.

\subsection{Problem formulation}\label{sec:preliminaries}
We define an inverse problem as a forward problem $(F,\mathcal{M}_1, \mathcal{E})$ equipped with the task from \eqref{eq:sampling10}. The sets $\mathcal{M}_1 \subset \mathcal{X}$ and $\mathcal{E} \subset \mathcal{Z}$ in \eqref{eq:sampling10} are non-empty and bounded in the (pseudo) metric spaces $(\mathcal{Z},d_{\mathcal{Z}})$ and $(\mathcal{X},d_{\mathcal{X}})$, where $d_{\mathcal{X}}$ is a pseudo-metric that satisfies the Heine-Borel property with respect to the set of equivalence classes $[x]:=\{z \in \mathcal{X}: d_{\mathcal{X}}(x,z)=0\}$ for $x \in \mathcal{X}$, such that every closed and bounded set is compact in the quotient set $\mathcal{X}/\sim$. We consider compactness with respect to equivalence classes, as $d_{\mathcal{X}}$ is a pseudo-metric and is not positive definite. $\pi_1\colon \mathcal{M}_1 \times \mathcal{E} \to \mathcal{M}_1$ denotes the projection $(x,e) \mapsto x$. The disjoint union of sets $A,B \subset \mathcal{X}$ is denoted by $A\cupdot B$. An approximate inverse map, that from noisy measurements in the set $\mathcal{M}_2$ computes an approximation or a set of approximations $\hat{x} \subset \mathcal{X}$ to the true solution $x \in \mathcal{M}_1$, is denoted by 
\begin{align}\label{eq:decodermethod2}
    \begin{split}
\phi: \mathcal{M}_2 \rightrightarrows \mathcal{X},  \quad y \mapsto \phi(y) = \hat{x}.        \end{split}
\end{align}
\noindent The reconstruction error \eqref{eq:decodermethod2} for approximately solving \eqref{eq:sampling10} is often quantified by the empirical expectation of the residual of the reconstruction and a solution defined by \eqref{eq:empiricalerror}. For set-valued inverse maps, we consider the reconstruction error in \eqref{eq:empiricalerror} with respect to the Hausdorff distance given by
\begin{align*}
d^H_{\mathcal{X}}(A,B) := \max\left\{\sup_{x \in A}\inf_{z \in B}d_{\mathcal{X}}(x,z), \sup_{z \in B}\inf_{x \in A}d_{\mathcal{X}}(x,z)\right\},
\end{align*}
\noindent for $A, B \subset \mathcal{X}$. For a singleton $x \in \mathcal{X}$ the Hausdorff distance reduces to $d^H_{\mathcal{X}}(A,x) = \sup_{z \in A}d_{\mathcal{X}}(x,z)$. In \eqref{eq:empiricalerror} explicitly only the samples $\{(x_m,y_m)\}_{m=1}^M \subset \mathcal{M}_1 \times \mathcal{M}_2$ are needed and not the noise samples $\{e_m\}_{m=1}^M \in \mathcal{E}$.\newline

\begin{minipage}[b]{0.41\textwidth}
\hspace{-10pt}
\begin{tikzpicture}[node distance=0.9cm,
    every node/.style={fill=white}, align=left,
	base/.style = {rectangle, draw=gray,
                           text centered, font=\small}  
    ]
      
      \node [base, rounded corners, xshift=-1.1cm] (sth) {Dataset $\mathcal{M}_1$};
      
      \node [base, right of=sth, xshift = 1cm, yshift=2.1cm] (fm) {Forward Model $F$};

      \node [base, rounded corners, left of=fm, xshift = -1cm, yshift=-1cm] (n) {Noise $\mathcal{E}$};
      
      \node [base, below of=fm,yshift=-3.1cm] (im) {Inverse Map $\phi$};
      
      \node [base, rounded corners, right of=fm, xshift = 0.7cm, yshift=-2.1cm] (sthel) {Dataset $\mathcal{M}_2$};

	\draw[thick, ->,color=black]  (sth) -- node {\textcolor{black}{$x$}}(fm);
    \draw[thick, ->,color=black]  (n) -- node {\textcolor{black}{$e$}}(fm);
	\draw[thick, ->,color=black]  (fm) -- node {\textcolor{black}{Easy: $y=F(x,e)$}} (sthel);
	\draw[thick, ->,color=black]  (sthel) -- node {\textcolor{black}{$y$}} (im);
	\draw[thick, ->,color=black]  (im) -- node {\textcolor{black}{Ill-posed: find $x$.}} (sth);
\end{tikzpicture}
\captionof{figure}{Inverse problem framework.}
\end{minipage}
\hspace{8pt}
\begin{minipage}[b]{0.505\textwidth}
The forward model $F: \mathcal{M}_1\times \mathcal{E}\rightarrow \mathcal{M}_2$ in \eqref{eq:sampling10} is a map onto its image, $\mathcal{M}_2 = F(\mathcal{M}_1\times \mathcal{E})$. Examples of the forward model include, but are not limited to,
\begin{align}\label{eq:noisemodels}
\begin{split}
	F(x,e) &= G(x) + e,    \\
	F(x,e) &= G(x)\odot e, \\
	F(x,e) &= G(x)\odot e_1 + e_2, 
\end{split}
\end{align}
where $G \colon \mathcal{X} \to \mathcal{Y}$ is a linear or non-linear forward model, and the ambient set $\mathcal{Z}$ of the set of noise $\mathcal{E}$ depends on the considered model in such a way that the point-wise multiplication $\odot$ is defined.
\end{minipage}

\subsection{Algorithms for computing the average kernel sizes}\label{sec:alg}

Now we introduce an algorithm for computing the lower accuracy bound - the average kernel size - to the reconstruction error \eqref{eq:empiricalerror} and the optimal accuracy established in Theorem \ref{thm:lowerboundapprx}. The lower bound in Theorem \ref{thm:lowerboundapprx} may be non-zero, as we assume that given a noisy measurement $y \in \mathcal{M}_2$, there can exist multiple feasible solutions $x \in \mathcal{M}_1$ that the forward model maps to the measurement with some realization of noise. As in \cite{gottschling2023existence} we denote the set of possible solutions $x\in\mathcal{M}_1$ corresponding to a given $y\in\mathcal{M}_2$ as the \emph{feasible set} with 
\begin{align}\label{eq:feasset}
	F_y:=\pi_1(F^{-1}(y)) = \{x \in \mathcal{M}_1: \exists e \in \mathcal{E} \text{ s.t. } F(x,e)=y\}.
\end{align}
For many inverse problems \eqref{eq:sampling10} the feasible sets are non-singletons and there can exist at least two signals $x,x' \in \mathcal{M}_1$ and noise realizations $e,e' \in \mathcal{E}$ that map to the same noisy measurement $y = F(x,e) = F(x',e')$. Methods for solving ill-posed inverse problems \eqref{eq:sampling10} rely on exact or approximate inverse maps $\phi: \mathcal{M}_2 \rightarrow \mathcal{X}$ that take noisy measurements $y \in \mathcal{M}_2$ as inputs and provide an approximation $\hat{x}$ to a solution $x\in F_y$. The core issue is that no exact or approximate inverse map $\phi: \mathcal{M}_2 \rightrightarrows \mathcal{X}$ can distinguish between elements in the feasible set $F_y$ of a given measurement $y \in \mathcal{M}_2$. The basic observation that the feasible set \eqref{eq:feasset} contains more than one element motivates the inequality in Theorem \ref{thm:lowerboundapprx}. Algorithm \ref{alg:feasset} approximates the feasible sets \eqref{eq:feasset} for a set of measurements, as depicted in Figure \ref{fig:allocfeas}. 
\begin{figure}[h!]
\centering
\begin{tikzpicture}[node distance=4.2cm,
    every node/.style={fill=white}, align=center,
	base/.style = {rectangle, rounded corners, draw=gray,
                           text centered, font=\small}    
    ]

      \node [base] (sy) {Samples $\{y_k\}_{k=1}^K \subset \mathcal{M}_2$};

    \node [base, minimum height = 3.1cm, minimum width = 6.6cm, right of=sy, xshift = 1.25cm, yshift=-0.3cm] (fs1x) { };
 
    \node [base, draw=white, right of=sy, xshift = 1.3cm, yshift=-1.4cm] (fs13x) {Samples $\left\{\{(x_{k,n},e_{k,n})\}_{n=1}^{N(k)}\right\}_{k=1}^K \subset \mathcal{M}_1\times\mathcal{E}$};
      
    \node [base, right of=sy, xshift = 2.9cm, yshift=0.7cm] (fs1) {Feasible set $F_{y_1}$};


    \node [base, right of=sy, xshift = 2.9cm, yshift=0cm] (fsi) {Feasible set $F_{y_k}$};
    \node [base, right of=sy, xshift = 2.9cm, yshift=-0.7cm] (fsI) {Feasible set $F_{y_K}$};

	\draw[thick, ->,color=black]  (fs1) -- node {\textcolor{black}{ $y_1=F(x_{1,n},e_{1,n})$}}(sy);
	\draw[thick, ->,color=black]  (fsi) -- node {\textcolor{black}{...}} (sy);
	\draw[thick, ->,color=black]  (fsI) -- node {\textcolor{black}{ $y_K=F(x_{K,n},e_{K,n})$}} (sy);
\end{tikzpicture}
\vspace{-2pt}
\caption{Allocation of $x$-components of samples into feasible sets with Algorithm \ref{alg:feasset}. Note that the noise components can enlarge the feasible sets for additive and multiplicative noise models. This allocation does not require any distributional assumptions and only relies on the problem formulation in Section \ref{sec:preliminaries}.}\label{fig:allocfeas}
\vspace{-5pt}
\end{figure}

\noindent Algorithm \ref{alg:feasset} requires a number $K \in \mathbb{N}$ of measurement samples in $\mathcal{M}_2$, a description of the dataset $\mathcal{M}_1\times \mathcal{E}$ and a non-injective forward model $F$ and returns a list of approximations to feasible sets $\left\{F_{y_k}^{N(k)}\right\}_{k=1}^K$, their sizes $\{N(k)\}_{k=1}^K$ and an extended dataset $\mathcal{D}_M$. In summary, the sampling notation denotes $K$ as the number of measurement instances $y_k$, $N(k)$ the feasible-set sample count for a measurement $y_k$, and $M=\sum_{k=1}^K N(k)$ the total number of pairs $(x_m,y_m)$ in the dataset $\mathcal{D}_M$. To compute the average kernel size, there are no restrictions on how the samples are acquired in Algorithm \ref{alg:feasset}, such as restricting to Monte Carlo or Markov chain Monte Carlo (MCMC) \cite{liu2001monte, casella2008monte}. Section \ref{sec:applications} demonstrates that this framework is applicable to complicated forward models, such as those in localization microscopy, and for very large data dimensions $d_1,d_2 \in \mathbb{N}$ where $\mathcal{X}=\mathbb{R}^{d_1}$ and $\mathcal{Y}=\mathbb{R}^{d_2}$, such as in multi-spectral satellite data. Given samples of measurements from an inverse problem \eqref{eq:sampling10}, the feasible sets for each measurement sample are approximated from below in Algorithm \ref{alg:feasset}. This step can be implemented in various ways, yet the samples of $x$-components all have to satisfy $F(x,e)= y$. This requires finding possible solutions and instances of noise that the forward model maps to the same measurement. In practice, the computational cost is therefore governed by how feasible-set samples can be obtained for the forward model at hand; our framework is effective when such samples are accessible - through a generative simulator, MCMC/posterior sampling, or the use of a linear-algebraic structure as in Theorem~\ref{lem:lemmasym}). The different possible samples of noise are a crucial part of any measurement and these are relevant in Algorithm \ref{alg:feasset}, as they increase the possible samples of $x$-components that can map to this measurement. \newline
\vspace{-5pt}
\begin{algorithm}[h!]
\caption{Algorithm for approximating the feasible sets}\label{alg:feasset}
\begin{algorithmic}[0]
\Require  $K, N_{\mathrm{max}} \in \mathbb{N}$, $\mathcal{M}_2$, $\mathcal{M}_1\times \mathcal{E}$, $F$
\State $\mathcal{D}_M \gets \emptyset$, $M \gets 0$
\State Initialize lists $\mathbf{N}\gets \{\,\}$ and $\mathbf{F}\gets\{\,\}$
\For{$k \in \{1,\dots,K\}$}
    \State $y_k \in \mathcal{M}_2$, 
    \Comment{The samples $y_k$ can be inputs or sampled during the algorithm.}
    \State $F_{y_k}^{N(k)} \gets \emptyset$
    \State $N(k) \gets 0$
    \While{$N(k) < N_{\mathrm{max}}$}
     \Comment{Threshold for feasible set samples.}
    \State{$(x_{k,n},e_{k,n}) \in \mathcal{M}_1\times \mathcal{E}$}
    \Comment{Depending on problem acquire samples.}
    \If{$F(x_{k,n},e_{k,n})= y_k$}
    \Comment{Implementation is forward model dependent.}
    \State $F_{y_k}^{N(k)} \gets F_{y_k}^{N(k)} \bigcup \{x_{k,n}\}$
    \State $\mathcal{D}_M \gets \mathcal{D}_M \cupdot \{(x_{k,n}, y_k)\}$
    \Comment{This relates $(x_{k,n}, y_k)$ to $(x_m, y_m)$ in \eqref{eq:empiricalerror}.}
    \State $N(k) \gets N(k)+1$
    \EndIf 
    \EndWhile
    \State $\mathbf{N}[k] \gets N(k)$
    \State $\mathbf{F}[k] \gets F_{y_k}^{N(k)}$
    \State $M \gets M+N(k)$
\EndFor 
\Return  $\mathbf{F}=\left\{F_{y_k}^{N(k)}\right\}_{k=1}^K$, $\mathbf{N}=\{N(k)\}_{k=1}^K$, $\mathcal{D}_M = \{(x_m,y_m)\}_{m=1}^M$
\end{algorithmic}
\end{algorithm}

\begin{remark}[Distribution-free bounds vs. sampling-dependent estimates]
In the while loop of Algorithm \ref{alg:feasset}, samples $(x_{k,n},e_{k,n}) \in \mathcal{M}_1\times \mathcal{E}$ are obtained. There is no restriction on how these samples of $x$- and noise components are acquired. For example, consider the set of bounded noise in a closed ball around zero and note that this set can be equipped with different distributions to model the noise. The closed ball around zero contains exactly the same instantiations of noise vectors, albeit with different frequencies. As Theorem \ref{thm:lowerboundapprx} is an analytical result, independent of any probabilistic setting, it is valid in both cases. If the sampling procedure is biased toward certain noise realizations (e.g. drawn from a specific noise distribution), then the numerical estimate may reflect this bias. In applications, the feasible-set sampling strategy can affect the numerical value, but not the validity of the inequality in Theorem \ref{thm:lowerboundapprx}.
\end{remark}

\begin{definition}[Average Kernel Size] 
Let $p \in [1,\infty)$. Let $(F, \mathcal{M}_1, \mathcal{E})$ be a forward problem. Fix $K, N_{\mathrm{max}} \in \mathbb{N}$ and apply Algorithm \ref{alg:feasset} to the forward problem $(F, \mathcal{M}_1, \mathcal{E})$ to obtain $\left\{\{x_{k,n}\}_{n=1}^{N(k)}\right\}_{k=1}^K = \left\{F_{y_k}^{N(k)}\right\}_{k=1}^K$, $\{N(k)\}_{k=1}^K$, $\mathcal{D}_M$. Let $N(k) \ge 2$ for all $k=1, \ldots K$. The average kernel size is defined as 
\begin{align}\label{eq:lowerboundogboiz0}
   \operatorname{Kersize}(F,\mathcal{M}_1,\mathcal{E},p)_K= \left(\tfrac{1}{M}\sum_{k=1}^K \tfrac{2}{N(k)-1} \sum_{1 \leq n < n' \leq N(k)}d_{\mathcal{X}}( x_{k,n},x_{k,n'})^p\right)^{1/p},
\end{align} 
where $M = \sum_{k=1}^KN(k)$.
\end{definition}

\noindent The inner double sum in \eqref{eq:lowerboundogboiz0} does not include the $N(k)$ diagonal terms that are zero and, hence, we divide by a factor $M(N(k)-1)$. If $N(k)=N(K)$ for all $k\in\{1,\dots,K\}$, this is a normalization by a factor $KN(K)(N(K)-1)$. Then, \eqref{eq:lowerboundogboiz0} reduces to
\begin{align}\label{eq:lowerboundogboiz}
   \operatorname{Kersize}(F,\mathcal{M}_1,\mathcal{E},p)_K^p= \tfrac{1}{K}\sum_{k=1}^K \tfrac{2}{N(K)(N(K)-1)} \sum_{1 \leq n < n' \leq N(K)} d_{\mathcal{X}}(x_{k,n},x_{k,n'})^p.
\end{align} 
The quantity in \eqref{eq:lowerboundogboiz0} is closely related to \cite[Definition 3.8]{gottschling2023existence}, but does not require distributional or measurability assumptions. The average kernel size measures the average pairwise distance between signal components of pairs $(x,e),(x',e')\in\mathcal{M}_1\times\mathcal{E}$ that generate the same measurement, $F(x,e)=F(x',e')=y_k$. For linear inverse problems, this corresponds to the average size of the null space intersected with the dataset difference set $\mathcal M_1-\mathcal M_1=\{x-x':x,x'\in\mathcal M_1\}$. This motivates the terminology "average kernel size" for general nonlinear forward models. 

\subsection{Sharp accuracy bounds for computing solutions to inverse problems}\label{sec:mainthm}

Our main theorem provides a lower bound to the reconstruction error \eqref{eq:empiricalerror} that any approximate inverse map for any inverse problem of the form \eqref{eq:sampling10} can attain. The lower bound, half the average kernel size, depends only on the feasible set samples and on the parameter $p \in [1,\infty)$, that is fixed by the relevant loss function, as well as evaluation tools, such as the pseudo-metric $d_{\mathcal{X}}$ on $\mathcal{X}$ and is independent of the particular approximate inverse map $\phi:\mathcal M_2\rightrightarrows \mathcal{X}$.  

\begin{theorem}[Accuracy Bounds for Computing Solutions to Inverse Problems]\label{thm:lowerboundapprx}
     Let $p \in [1,\infty)$. Let $(F, \mathcal{M}_1, \mathcal{E})$ be a forward problem. Fix $K, N_{\mathrm{max}} \in \mathbb{N}$ and apply Algorithm \ref{alg:feasset} to the forward problem $(F, \mathcal{M}_1, \mathcal{E})$ to obtain $\left\{F_{y_k}^{N(k)}\right\}_{k=1}^K$, $\{N(k)\}_{k=1}^K$ with $N(k) \geq 2$ for all $k \in \{1, \ldots, K\}$, $\mathcal{D}_M= \{(x_m,y_m)\}_{m=1}^M$ with $M = \sum_{k=1}^K N(k)$ and compute the average kernel size \eqref{eq:lowerboundogboiz0}. Then, \textbf{half the average kernel size is a lower bound} to the reconstruction error of any approximate inverse map $\phi: \mathcal{M}_2 \rightrightarrows \mathcal{X}$ on the dataset $\mathcal{D}_M$,
\begin{align*}
			\tfrac{1}{2}\operatorname{Kersize}(F,\mathcal{M}_1,\mathcal{E},p)_K
			 \leq  \mathcal{L}(\mathcal{D}_M, \phi,p).
\end{align*}
Additionally, we have that
\begin{align*}
			\tfrac{1}{2}\operatorname{Kersize}(F,\mathcal{M}_1,\mathcal{E},p)_K
			 \leq  \inf_{\phi} \mathcal{L}(\mathcal{D}_M, \phi,p) \leq \operatorname{Kersize}(F,\mathcal{M}_1,\mathcal{E},p)_K,
\end{align*}
\noindent where the infimum is taken over all approximate inverse maps $\phi: \mathcal{M}_2 \rightrightarrows \mathcal{X}$. One map that attains the infimum is an extension to $\mathcal{M}_2$ of a map satisfying
\begin{align*}
\theta(y_k) = \argmin_{z \in \mathcal{X}}  \sum_{n=1}^{N(k)}d_{\mathcal{X}}(x_{k,n},z)^p.
\end{align*}
\end{theorem}

\noindent Theorem \ref{thm:lowerboundapprx} in combination with Algorithm \ref{alg:feasset} provides a practical recipe for quantifying the lower accuracy bound to the reconstruction error of any approximate inverse map. This is useful in various settings. For example, for deep learning engineers that aim to fit an optimal model or network architecture as an approximate inverse map for a given inverse problem on a given dataset with supervised training. The bound in Theorem~\ref{thm:lowerboundapprx} is stated for the infimum over all, possibly unstable, inverse maps. Since Lipschitz continuous maps form a subclass, the same lower bound applies to Lipschitz decoders:
\begin{align*}
\tfrac{1}{2}\,\operatorname{Kersize}(F,\mathcal{M}_1,\mathcal{E},p)_K
\le \inf_{\phi}\mathcal{L}(\mathcal{D}_M,\phi,p)
\le \inf_{\substack{\phi:\mathcal{M}_2\to\mathcal X\\ L(\phi)<\infty}}\mathcal{L}(\mathcal{D}_M,\phi,p).
\end{align*}
The lower bound also holds for maps obtained by regularization as in \eqref{eq:regsolvers},
\begin{align*}
			\tfrac{1}{2}\operatorname{Kersize}(F,\mathcal{M}_1,\mathcal{E},p)_K
			 \leq  \mathcal{L}(\mathcal{D}_M, \phi_{\alpha},p) \quad \text{ for } \alpha \in (0,\infty).
\end{align*}
\noindent In Section \textbf{SM} $2$ we show that the lower bound is sharp - in the sense that for a specific forward model, dataset, and set of noise, as well as approximate inverse map, the lower bound is attained. Theorem \ref{thm:lowerboundapprx} also provides an upper bound, the average kernel size, on the accuracy of the reconstruction for the optimal approximate inverse maps that attain the infimum in the accuracy limit \eqref{eq:empiricalerrorlimit}. 

\begin{remark}[Application to Finite Datasets $\mathcal{M}_1$]
Consider $M' \in \mathbb{N}$ and $\mathcal{D}_{M'}= \{(x_m,y_m)\}_{m=1}^{M'} \subset \mathcal{M}_1\times \mathcal{M}_2$ with a given forward model $F$ and noise $\mathcal{E}$. Theorem \ref{thm:lowerboundapprx} in combination with Algorithm \ref{alg:feasset} now enable to compute accuracy bounds before fitting a model to a given dataset. The relation between the dataset $\mathcal{D}_{M'}=\{(x_m,y_m)\}_{m=1}^{M'} $ of size $M' \in \mathbb{N}$ and the allocation to feasible sets $\{F_{y_k}^{N(k)}\}_{k=1}^K$ and dataset $\mathcal{D}_M$ is provided by \ref{alg:feasset}, when $\mathcal{D}_M$ is initialized with $\mathcal{D}_{M'}$ instead of the empty set. By setting $I = |\pi_2(\mathcal{D}_{M'})|$, $\mathcal{M}_2 = \pi_2(\mathcal{D}_{M'})$ and $\mathcal{M}_1 = \pi_1(\mathcal{D}_{M'})$ and letting $N_{\mathrm{max}} = |\mathcal{M}_1| \in \mathbb{N}$ the full dataset can be allocated into feasible sets and a larger dataset of size $M \geq M'$. This means that the output dataset $\mathcal{D}_M$ of Algorithm \ref{alg:feasset} is a superset of the input data, $\mathcal{D}_{M'}\subset \mathcal{D}_M$. 
\end{remark}

\subsection{Fast lower bound}

Now let $\mathcal{X}=\mathbb{C}^{d_1}$ and $\mathcal{Y}=\mathbb{C}^{d_2}$ with dimensions $d_1,d_2 \in \mathbb{N}$ and equip $\mathbb{C}^{d_1}$ with a pseudo-norm $\|\cdot\|$. For forward models $F: \mathcal{M}_1\times \mathcal{E} \rightarrow \mathcal{M}_2$ of the form $F(x,e) = Ax+e$, with $A \in \mathbb{C}^{d_2\times d_1}$ and $(x,e) \in \mathcal{M}_1\times \mathcal{E}$, for a dataset of size $M$, Theorem \ref{thm:lowerboundapprx} can be optimized with an additional symmetry assumption from the computational complexity of $\mathcal{O}(M^2)$ down to $\mathcal{O}(M)$. In inner product spaces the symmetry assumption in \eqref{eq:reflected_null} is equivalent to assuming that the reflection of a vector around the orthogonal component to the kernel of the forward model is a realistic data point. If the forward model $F$ is a linear map on $\mathcal{M}_1\times \mathcal{E}$, we can compute the projection onto the kernel of $F$ with its Moore-Penrose inverse, \cite{korolev2020inverse,ben2006generalized}. In the following $P_{\mathcal{N}(F)}: \mathcal{M}_1 \times \mathcal{E} \rightarrow \mathcal{N}(F) \subset \mathcal{M}_1 \times \mathcal{E}$ denotes the projection onto the kernel of the forward model $F$ and is computed by $P_{\mathcal{N}(F)}=\mathds{1}-F^{\dagger}F$, where $F^\dagger$ is the Moore-Penrose inverse of $F$.

\begin{theorem}[Fast Lower Bound for Functions on the Measurement Data]\label{lem:lemmasym}
    Let $p \in [1,\infty)$. Let $(F, \mathcal{M}_1, \mathcal{E})$ be a forward problem with $F(x,e) = Ax+e$ with $A \in \mathbb{C}^{d_2\times d_1}$ for $(x,e) \in \mathcal{M}_1\times \mathcal{E}$. Let $\phi: \mathcal{M}_2 \rightarrow \mathbb{C}^{d_1}$ be an approximate inverse map. Let $\mathcal{D}_{M'}=\{(x_m,y_m)\}_{m=1}^{M'} \subset \mathcal{M}_1\times \mathcal{M}_2$. Extend $\mathcal{D}_{M'}$ to $\mathcal{D}_M$ by adding 
    \begin{align}\label{eq:reflected_null}
    (x',y_m) = (\pi_1\left((x_m,e_m)-2P_{\mathcal{N}(F)}(x_m,e_m)\right), y_m) \text{ to } \mathcal{D}_M,
    \end{align}
    for all $x_m \in \pi_1(\mathcal{D}_{M'})$ with $e_m \in \mathcal{E}$ such that $F(x_m,e_m)=y_m$ and $x' \in \mathcal{M}_1$.
    Then,
\begin{itemize}    
   \item[(i)] we have that the \textbf{average symmetric kernel size} is a lower bound to the reconstruction error of any approximate inverse map $\phi$ on the dataset $\mathcal{D}_M= \{(x_m,y_m)\}_{m=1}^M$, with $M=2M'$
\begin{align*}
			\operatorname{SKersize}(F,\mathcal{M}_1,\mathcal{E},p)_M:= \left(\frac{1}{M'} \sum_{m=1}^{M'} \|v_{m}\|^p\right)^{1/p}
			 \leq \mathcal{L}(\mathcal{D}_M, \phi,p),
\end{align*}
 with $\|v_{m}\| = \|\pi_1(P_{\mathcal{N}(F)}(x_{m},e_{m}))\|$
for $m \in \{1, \dots,M'\}$.
\item[(ii)] additionally, we have that
\begin{align*}
		\operatorname{SKersize}(F,\mathcal{M}_1,\mathcal{E},p)_M
			 \leq \inf_{\phi: \mathcal{M}_2 \rightarrow \mathbb{C}^{d_1}} \mathcal{L}(\mathcal{D}_M, \phi,p) \leq 2  	\operatorname{SKersize}(F,\mathcal{M}_1,\mathcal{E},p)_M,
\end{align*}
\noindent where the infimum is taken over all approximate inverse maps $\phi: \mathcal{M}_2 \rightarrow \mathbb{C}^{d_1}$.
\end{itemize}
\end{theorem}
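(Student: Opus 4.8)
The plan is to exploit the rigid structure of the augmented dataset $\mathcal{D}$: each original pair $(x_m,y_m)$ is matched with its reflection across $\mathcal{N}(F)^{\perp}$, so that in $\mathcal{D}$ every $y_m$ occurs with exactly two $x$-values whose separation is pinned down by $\|v_m\|$. This is the $N(k)\equiv 2$ instance of the mechanism behind Theorem~\ref{thm:lowerboundapprx}, but the reflection makes the triangle-inequality step tight, which is the source of the sharper constant. Concretely, since $F(x,e)=Ax+e$ is linear, $2P_{\mathcal{N}(F)}(x_m,e_m)\in\mathcal{N}(F)$, hence $F\bigl((x_m,e_m)-2P_{\mathcal{N}(F)}(x_m,e_m)\bigr)=F(x_m,e_m)=y_m$; with $x'_m:=\pi_1\bigl((x_m,e_m)-2P_{\mathcal{N}(F)}(x_m,e_m)\bigr)$ and $v_m:=\pi_1\bigl(P_{\mathcal{N}(F)}(x_m,e_m)\bigr)$ this gives the identity $x_m-x'_m=2v_m$, so $\|x_m-x'_m\|=2\|v_m\|$ by absolute homogeneity of the pseudo-norm, and $\mathcal{D}$ consists of the $2M'$ points $x_1,x'_1,\dots,x_{M'},x'_{M'}$ with $x_m$ and $x'_m$ both attached to $y_m$, $M=2M'$.

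For part (i), fix any single-valued $\phi$ and set $\hat x_m:=\phi(y_m)$, $a_m:=\|x_m-\hat x_m\|$, $b_m:=\|x'_m-\hat x_m\|$. The triangle inequality gives $2\|v_m\|=\|x_m-x'_m\|\le a_m+b_m$, and convexity of $t\mapsto t^p$ (directly for $p\ge 1$, by the matching power-mean estimate otherwise) upgrades this to $2\|v_m\|^p\le a_m^p+b_m^p$. Averaging over the $2M'$ points of $\mathcal{D}$,
\[
\mathcal{L}(\mathcal{D},\phi,p)^p=\frac{1}{2M'}\sum_{m=1}^{M'}\bigl(a_m^p+b_m^p\bigr)\ \ge\ \frac{1}{M'}\sum_{m=1}^{M'}\|v_m\|^p=\operatorname{SKersize}(F,\mathcal{M}_1,\mathcal{E},p)_M^p,
\]
which is the claimed lower bound; note only single-valuedness of $\phi$ is used, not measurability.

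For part (ii) the left inequality follows from (i) by passing to the infimum. For the right inequality I would produce the explicit, linear (hence measurable) map $\phi^{\star}:=\pi_1\circ F^{\dagger}$: since $(x_m,e_m)$ solves $F(\cdot)=y_m$ and $F^{\dagger}y_m$ is the minimum-norm solution, $F^{\dagger}y_m=(x_m,e_m)-P_{\mathcal{N}(F)}(x_m,e_m)$, whence $\phi^{\star}(y_m)=x_m-v_m$, the midpoint of $x_m$ and $x'_m$; therefore $\|x_m-\phi^{\star}(y_m)\|=\|x'_m-\phi^{\star}(y_m)\|=\|v_m\|$ and $\mathcal{L}(\mathcal{D},\phi^{\star},p)=\operatorname{SKersize}(F,\mathcal{M}_1,\mathcal{E},p)_M\le 2\operatorname{SKersize}(F,\mathcal{M}_1,\mathcal{E},p)_M$. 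Combined with (i) this in fact pins down $\inf_{\phi}\mathcal{L}(\mathcal{D},\phi,p)$ as $\operatorname{SKersize}(F,\mathcal{M}_1,\mathcal{E},p)_M$. (Alternatively one may invoke the $\theta$ of Theorem~\ref{thm:lowerboundapprx} specialised to $N(K)=2$, testing against $z=x_m-v_m$.)

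The points requiring care are bookkeeping rather than a genuine obstacle: (a) the linear-algebra identities — that $2P_{\mathcal{N}(F)}(x_m,e_m)\in\mathcal{N}(F)$, that $F^{\dagger}y_m$ returns the $\mathcal{N}(F)^{\perp}$-component of any solution of $F(\cdot)=y_m$ (so that $\phi^{\star}(y_m)$ really is a function of $y_m$), and $P_{\mathcal{N}(F)}=\mathrm{Id}-F^{\dagger}F$; and (b) the passage from the triangle inequality for $\|\cdot\|$ to the inequality for $p$-th powers, which is clean via convexity for $p\ge 1$ and needs a more delicate power-mean argument in the regime $p<1$.
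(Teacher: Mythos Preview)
Your proof follows the paper's approach closely: both establish $x_m - x'_m = 2v_m$ via the Moore--Penrose projection identities and then apply the inequality $\|a+b\|^p \le 2^{p-1}(\|a\|^p+\|b\|^p)$ to obtain part~(i); for part~(ii) the paper simply refers back to the $\theta$-construction of Theorem~\ref{thm:lowerboundapprx} specialised to feasible sets $\{x_m,x'_m\}$ of size two, which is precisely the alternative you list. Your explicit witness $\phi^\star=\pi_1\circ F^{\dagger}$ is a minor but pleasant sharpening---it in fact shows the infimum \emph{equals} $\operatorname{SKersize}$ rather than merely $\le 2\,\operatorname{SKersize}$---and your caution about the regime $p<1$ is well placed, as the paper's argument also leans on the same convexity inequality there without further comment.
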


\section{Computing accuracy bounds in applied inverse problems}\label{sec:applications}

We apply the framework from Section \ref{sec:preliminaries} to the inverse problems that arise in localization microscopy and super-resolution of multi-spectral satellite data. We formulate the inverse problems in the framework and empirically verify Theorem \ref{thm:lowerboundapprx} and Theorem \ref{lem:lemmasym} by computing the lower and upper accuracy bounds based on the average (symmetric) kernel size and, respectively, the loss.

\subsection{Accuracy bounds in microscopy}\label{sec:micro}
Localization microscopy aims to localize point-like light-sources at the highest possible resolution that attains the system's inherent accuracy limits \cite{lee2017unraveling,sgouralis2024bnp}. With the average kernel size, the upper and lower bounds of the optimum localization error in microscopy can be quantified. In previous work in localization microscopy, the inverse of the Fisher information matrix is computed as a lower bound to the variance of an estimator for the localization \cite{ober2004localization}. The work of \cite{ober2004localization} uses the classical Cramer-Rao bound \cite{cramer1999mathematical,zacks1971theory,rao1973linear,kay1993fundamentals}, which bounds the variance of an unbiased estimator by the inverse Fisher information matrix. For the derivation of this lower bound in \cite{ober2004localization} the following assumptions are made. Firstly, the estimator is unbiased (i.e., an estimation procedure whose mean produces the correct result), second, the spatial coordinates of the detected photons are independent and identically distributed random variables, and thirdly, the counting process and the random variables that describe the spatial coordinates are mutually independent. Unfortunately, these assumptions are not satisfied in many applications of interest to practitioners \cite{lee2017unraveling}. For example, as there may exist multiple possible locations corresponding to a measurement data sample, obtaining a deterministic unbiased estimator may not be possible, even with a very large sample size. As Theorem \ref{thm:lowerboundapprx} holds under more general assumptions, it provides a lower bound to the localization accuracy of any estimator in this context. The best estimators for a given dataset can be determined using the lower and upper bounds of Theorem \ref{thm:lowerboundapprx}.

\subsubsection{Localization microscopy as an inverse problem}

We give a brief conceptual introduction to single-molecule localization microscopy as an inverse problem. For further details, see the reviews on \cite{lee2017unraveling,von2017three}. In localization microscopy, one objective is to recover the position of a particle in a volume $\mathcal{V} \subset \mathbb{R}^3$ and the background photon flux $C \in [0,C_{max}]$ and the particle photon emission rate $h \in [0, h_{max}]$ given a detection discretized into $d^x_2\times d^y_2$ pixels for $d_2^x, d^y_2 \in \mathbb{N}$. In summary, one tries to reconstruct $\Theta = (x,y,z, C, h)$ from noisy measurements of pixel-wise intensities at pixels $(p_x,p_y) \in \mathcal{I}_{d_2} = \{1, \dots,d^x_2\} \times \{1, \dots,d_2^y\}$. Hence, the set of signals is given by $\mathcal{M}_1 = \mathcal{V} \times [0,C_{max}] \times [0,h_{max}] \subset \mathcal{X}=\mathbb{R}^5$, the set of noise is given by $\mathcal{E} = \mathcal{B}(0,\mathds{1}\epsilon_1) \times \mathcal{B}(0,\mathds{1}\epsilon_2) \subset \mathbb{R}^{2(d^x_2\times d^y_2)}$ and the forward model is given by
\begin{align}\label{eq:forwardmicro}
\begin{split}
    F : \mathcal{M}_1 \times \mathcal{E} \rightarrow [0,I_{max}]^{d^x_2\times d^y_2}, 
    (\Theta, e) \mapsto F(\Theta, e) :=  \left(w_{(p_x,p_y)}(\Theta, e)\right)_{(p_x,p_y) \in \mathcal{I}_{d_2}},
    \end{split}
\end{align}
where $I_{max} \in (0,\infty)$ is the maximum measured intensity. The measured intensity $w_{(p_x,p_y)}(\Theta, e)$ for $(p_x,p_y)\in\mathcal{I}_{d_2}$ results from a mixture of additive and multiplicative noise and a non-linear forward model, for details, see Section \textbf{SM} $3.1$. In this setting, the inverse problem in the framework of \eqref{eq:sampling10} is given by
\begin{align}\label{eq:samplingmicro}
    \text{recover } \Theta\text{ given data } F(\Theta, e) \text{ of } \Theta \in \mathcal{M}_1 \subset \mathbb{R}^5 \text{ and }  e \in \mathcal{E}\subset \mathbb{R}^{2(d^x_2\times d^y_2)}.
\end{align}
\noindent The key point here is that the inverse problem of localization microscopy in \eqref{eq:samplingmicro} satisfies our central assumption \eqref{eq:centralassumption}. Two different $\Theta, \Theta' \in \mathcal{M}_1$ with $\Theta \neq \Theta'$ and noise samples $e,e' \in \mathcal{E}$ can be mapped to the same noisy measurement 
\begin{align}\label{eq:illposedmicro}
F(\Theta, e) = F(\Theta', e').
\end{align}
\noindent \eqref{eq:illposedmicro} states that two correct results can lead to exactly the same noisy measurement. As there is more than one possible correct result, and, the average kernel size is non-zero, and, hence, there is no estimation procedure that can yield a mean which is the unique correct result. As \eqref{eq:centralassumption} is satisfied, this motivates computing the accuracy bounds, the average kernel sizes, and validating the lower bound in Theorem \ref{thm:lowerboundapprx}.

\subsubsection{Numerical experiments for computing the average kernel size}

\begin{table}[h!]
\resizebox{\linewidth}{!}{%
\begin{tabular}{l|cc|cc|cc|cc}
\hline
Estimators & A1 & & A2 & & A3 & & A4 \\
\hline
Dataset Size & $25$ & $25\times 1501$ & $25$ & $25 \times 1501$ & $25$ & $25 \times 1501$  & $25$ & $25 \times 1501$ \\
\hline
Mean & $2.917$ & $2.634$ & $3.905$ & $3.965$ & $6.559$ & $6.296$ & $12.195$ & $11.635$  \\
Median & $2.634$ & $2.634$ & $3.965$ & $3.965$ & $6.294$ & $6.294$ & $ 11.632$ & $11.632$\\
MAP & $3.611$ & $3.611$ & $5.089$ & $5.089$ & $9.318$ & $9.318$ & $16.923$ & $ 16.923$ \\
ML  & $4.248$ & $3.719$ & $5.338$ & $5.812$ & $9.453$ & $8.153$ & $17.009$ & $16.675$ \\
 \hline
\hline
$\frac{1}{2}\operatorname{Kersize}$  & $\qquad$ & $1.864$ & $\qquad$ & $2.805$ & $\qquad$ & $4.452$ & $\qquad$ & $8.228$  \\
\hline
\end{tabular}
}
\caption{RMSE in $[nm]$ for different estimators on the dataset consisting of the ground truth and on the dataset output of Algorithm \ref{alg:feasset} for microscope set-ups $\{A_1,A_2,A_3,A_4\}$. The last row is the corresponding lower accuracy bound, half the average kernel size $\frac{1}{2}\operatorname{Kersize}(F,\mathcal{M}_1,\mathcal{E},2)_K$, for microscope set-ups $\{A_1,A_2,A_3,A_4\}$ with $K=25$ and $N(K)=1501$.}\label{tab:lossfscm}
\vspace{-5pt}
\end{table}

\begin{figure}[h!]
\begin{subfigure}{\textwidth}
\centering
\includegraphics[width=0.9\textwidth]{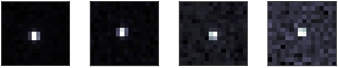}
\caption{Different noisy measurements per microscope set-up $\{A_1,A_2,A_3,A_4\}$ with increasing  background photon flux and decreasing particle photon emission rate from left to right.} \label{fig:estimatorvskerfscm_pics}
~\\\vspace{-10pt}
\end{subfigure}
\begin{subfigure}{0.50\textwidth}
\begin{tikzpicture}[scale=0.75]
	\begin{axis}[legend pos=south east,%
    xmax=10, xmin=1, ymax=25, ymin=1, 
    title={\large \textbf{Mean Estimator}},
  xlabel={$\frac{1}{2}\operatorname{Kersize}(F,\mathcal{M}_1,\mathcal{E},2)_1$ [nm]},
  ylabel={RMSE of Mean Estimator [nm]},
	scatter/classes={%
		A2={mark=o,blue},%
		A3={mark=o,cyan},
        A4={mark=o,Aquamarine},
        A5={mark=o,NavyBlue}}%
        ]
	\addplot[scatter,only marks,%
		scatter src=explicit symbolic]%
	table[meta=label] {
x    y label
  1.410391  2.012727    A2
  1.262702  1.795209    A2
   1.470487  2.089624    A2
  1.440023  2.040633    A2
   1.241982  1.756619    A2
  1.265349  1.789438    A2
  1.318187  1.881723    A2
  1.434148  2.034203    A2
  1.400375 1.981509    A2
   1.362142  1.927547    A2
 1.350037  1.908635    A2
  1.429750  2.029405    A2
 1.451891 2.076455    A2
  1.267500  1.804334    A2
 1.228678  1.740209    A2
  1.381924 1.965922    A2
 1.482610 2.101450    A2
 1.254132 1.779731    A2
 1.170161 1.656826    A2
  1.425620  2.031788    A2
 1.334797  1.891618    A2
 1.244329  1.759771    A2
  1.167626  1.660904    A2
  1.553520  2.199503    A2
1.181366  1.673204    A2
   2.729674  3.859451    A3
   3.030944  4.285021    A3
  2.716648 3.840719    A3
   2.849334  4.028307    A3
   2.822844  3.991414    A3
   2.821014  3.988723    A3
   2.848923  4.030836    A3
   2.747234  3.883970    A3
   2.618895  3.702694    A3
   2.918310  4.125995    A3
  2.880222  4.071986    A3
  2.859253  4.042698    A3
  2.937669  4.153428    A3
  2.637573  3.730195    A3
  2.632219  3.721469    A3
  2.958203  4.182644    A3
  2.885864  4.081676    A3
  2.678167  3.786453    A3
  3.020559  4.270644    A3
  2.722384  3.848772    A3
  2.670885  3.777233    A3
  2.766845  3.913119    A3
  2.742253  3.878445    A3
  2.804215  3.964532    A3
  2.751720  3.891133    A3
   4.481023  6.335915    A4
   4.587538  6.488511    A4
   4.109176  5.813312    A4
   4.351643  6.152180    A4
   4.030238  5.698186    A4
   4.602496  6.506829    A4
   4.490561  6.349157    A4
   4.715957  6.670419    A4
   4.113106  5.816764    A4
  4.578288  6.472727    A4
  4.719583  6.675730    A4
  4.261239  6.028870    A4
  4.546144  6.427689    A4
  4.457560  6.301856    A4
  4.462218  6.310699    A4
  4.220948  5.972678    A4
  4.628749  6.545125    A4
  4.321630  6.110246    A4
  4.679321  6.617145    A4
  4.224017  5.972094    A4
  4.669143  6.605586    A4
  4.378358  6.190033    A4
  4.310618  6.094651    A4
  4.254440  6.015384    A4
  4.969526  7.026013    A4
   8.059731  11.402658    A5
   8.409681  11.892112    A5
   9.037278  12.780899    A5
   7.601414  10.746629    A5
   8.978950  12.694528    A5
   8.342168  11.795354    A5
   8.111467  11.467711    A5
   9.167278  12.960414    A5
   8.756119  12.379135    A5
   7.829482  11.076340    A5
  8.168964  11.555059    A5
  7.491770  10.592497    A5
  7.651813  10.828108    A5
  8.336915  11.791145    A5
  8.090838  11.438895    A5
  8.441580  11.938763    A5
  7.756363  10.965897    A5
  7.930849  11.214834    A5
  8.588233  12.147812    A5
  7.136297  10.091163    A5
  8.072102  11.415063    A5
  7.948745  11.238478    A5
  8.380850  11.848393    A5
  8.551694  12.090586    A5
  8.500126  12.017741    A5
	};
    \addplot[lightgray, thick, domain=0:12] (x,x);
      \addplot[lightgray, thick, domain=0:12] (x,2*x);
    \legend{A1, A2, A3, A4}
	\end{axis}
\end{tikzpicture}
\end{subfigure}\hfill
\begin{subfigure}{0.48\textwidth}
\begin{tikzpicture}[scale=0.75]
	\begin{axis}[legend pos=south east,%
    xmax=10, xmin=1, ymax=25, ymin=1, 
    title={\large \textbf{Median Estimator}},
  xlabel={$\frac{1}{2}\operatorname{Kersize}(F,\mathcal{M}_1,\mathcal{E},2)_1$ [nm]},
	scatter/classes={%
		A2={mark=o,blue},%
		A3={mark=o,cyan},
        A4={mark=o,Aquamarine},
        A5={mark=o,NavyBlue}}%
        ]
	\addplot[scatter,only marks,%
		scatter src=explicit symbolic]%
	table[meta=label] {
x    y label
   8.059731  11.394387    A5
   8.409681  11.889124    A5
   9.037278  12.776386    A5
   7.601414  10.746440    A5
   8.978950  12.693925    A5
   8.342168  11.793681    A5
   8.111467  11.467528    A5
   9.167278  12.960171    A5
   8.756119  12.378898    A5
   7.829482  11.068873    A5
  8.168964  11.548821    A5
  7.491770  10.591437    A5
  7.651813  10.817704    A5
  8.336915  11.786251    A5
  8.090838  11.438360    A5
  8.441580  11.934221    A5
  7.756363  10.965501    A5
  7.930849  11.212180    A5
  8.588233  12.141553    A5
  7.136297  10.088887    A5
  8.072102  11.411873    A5
  7.948745  11.237486    A5
  8.380850  11.848364    A5
  8.551694  12.089895    A5
  8.500126  12.016990    A5
4.481023	6.335012	A4
4.587538	6.485599	A4
4.109176	5.809317	A4
4.351643	6.152102	A4
4.030238	5.697718	A4
4.602496	6.506744	A4
4.490561	6.348496	A4
4.715957	6.667149	A4
4.113106	5.814874	A4
4.578288	6.472524	A4
4.719583	6.672280	A4
4.261239	6.024294	A4
4.546144	6.427075	A4
4.457560	6.301842	A4
4.462218	6.308427	A4
4.220948	5.967334	A4
4.628749	6.543860	A4
4.321630	6.109671	A4
4.679321	6.615361	A4
4.224017	5.971675	A4
4.669143	6.600968	A4
4.378358	6.189874	A4
4.310618	6.094105	A4
4.254440	6.014684	A4
4.969526	7.025631	A4
2.729674	3.859057	A3
3.030944	4.284975	A3
2.716648	3.840642	A3
2.849334	4.028225	A3
2.822844	3.990775	A3
2.821014	3.988188	A3
2.848923	4.027644	A3
2.747234	3.883883	A3
2.618895	3.702447	A3
2.918310	4.125739	A3
2.880222	4.071893	A3
2.859253	4.042247	A3
2.937669	4.153108	A3
2.637573	3.728851	A3
2.632219	3.721280	A3
2.958203	4.182137	A3
2.885864	4.079870	A3
2.678167	3.786240	A3
3.020559	4.270294	A3
2.722384	3.848750	A3
2.670885	3.775944	A3
2.766845	3.911606	A3
2.742253	3.876840	A3
2.804215	3.964438	A3
2.751720	3.890223	A3
1.410391	1.993995	A2
1.262702	1.785225	A2
1.470487	2.078975	A2
1.440023	2.035835	A2
1.241982	1.756159	A2
1.265349	1.788946	A2
1.318187	1.863778	A2
1.434148	2.027783	A2
1.400375	1.979813	A2
1.362142	1.925747	A2
1.350037	1.908802	A2
1.429750	2.021333	A2
1.451891	2.052759	A2
1.267500	1.792038	A2
1.228678	1.737323	A2
1.381924	1.953854	A2
1.482610	2.096114	A2
1.254132	1.773022	A2
1.170161	1.654544	A2
1.425620	2.015566	A2
1.334797	1.887063	A2
1.244329	1.759429	A2
1.167626	1.650975	A2
1.553520	2.196277	A2
1.181366	1.670306	A2
	};
    \addplot[lightgray, thick, domain=0:12] (x,x);
      \addplot[lightgray, thick, domain=0:12] (x,2*x);
    \legend{A1, A2, A3, A4}
	\end{axis}
\end{tikzpicture}
\end{subfigure}
\begin{subfigure}{0.50\textwidth}
\begin{tikzpicture}[scale=0.75]
	\begin{axis}[legend pos=south east,%
    xmax=10, xmin=1, ymax=25, ymin=1, 
    title={\large \textbf{MAP Estimator}},
  xlabel={$\frac{1}{2}\operatorname{Kersize}(F,\mathcal{M}_1,\mathcal{E},2)_1$ [nm]},
  ylabel={RMSE of MAP Estimator [nm]},
	scatter/classes={%
		A2={mark=o,blue},%
		A3={mark=o,cyan},
        A4={mark=o,Aquamarine},
        A5={mark=o,NavyBlue}}%
        ]
	\addplot[scatter,only marks,%
		scatter src=explicit symbolic]%
	table[meta=label] {
x    y label
1.410391	2.026694	A2
1.262702	2.392322	A2
1.470487	3.780477	A2
1.440023	2.080749	A2
1.241982	1.764034	A2
1.265349	2.636432	A2
1.318187	3.073295	A2
1.434148	2.101903	A2
1.400375	2.521189	A2
1.362142	3.198665	A2
1.350037	1.923810	A2
1.429750	2.300238	A2
1.451891	2.371267	A2
1.267500	2.160331	A2
1.228678	1.890917	A2
1.381924	3.334087	A2
1.482610	2.627081	A2
1.254132	1.844168	A2
1.170161	1.953265	A2
1.425620	2.983426	A2
1.334797	2.145176	A2
1.244329	2.073553	A2
1.167626	2.127454	A2
1.553520	2.342070	A2
1.181366	1.892506	A2
2.729674	4.987865	A3
3.030944	4.957930	A3
2.716648	4.551524	A3
2.849334	6.112926	A3
2.822844	6.795316	A3
2.821014	4.052720	A3
2.848923	4.618461	A3
2.747234	4.796604	A3
2.618895	3.792746	A3
2.918310	4.696338	A3
2.880222	6.124748	A3
2.859253	4.731518	A3
2.937669	5.866558	A3
2.637573	3.974557	A3
2.632219	3.893485	A3
2.958203	7.092171	A3
2.885864	4.321750	A3
2.678167	4.938794	A3
3.020559	4.677402	A3
2.722384	6.202092	A3
2.670885	4.722587	A3
2.766845	4.491862	A3
2.742253	4.924923	A3
2.804215	5.687282	A3
2.751720	4.309244	A3
4.481023	9.330971	A4
4.587538	12.776652	A4
4.109176	7.234584	A4
4.351643	9.319180	A4
4.030238	7.217898	A4
4.602496	7.385255	A4
4.490561	7.173124	A4
4.715957	9.156677	A4
4.113106	8.401472	A4
4.578288	7.626792	A4
4.719583	11.157311	A4
4.261239	6.767768	A4
4.546144	7.065387	A4
4.457560	10.609736	A4
4.462218	8.609679	A4
4.220948	10.361549	A4
4.628749	12.491363	A4
4.321630	10.182249	A4
4.679321	9.089064	A4
4.224017	6.344238	A4
4.669143	7.925203	A4
4.378358	11.815376	A4
4.310618	8.715852	A4
4.254440	13.029051	A4
4.969526	8.097671	A4
8.059731	24.747876	A5
8.409681	13.669057	A5
9.037278	14.172486	A5
7.601414	16.165371	A5
8.978950	14.990911	A5
8.342168	17.381171	A5
8.111467	12.478985	A5
9.167278	25.720593	A5
8.756119	13.317881	A5
7.829482	11.117803	A5
8.168964	12.595723	A5
7.491770	12.010074	A5
7.651813	12.608140	A5
8.336915	16.907628	A5
8.090838	16.655968	A5
8.441580	31.434419	A5
7.756363	13.375611	A5
7.930849	11.782283	A5
8.588233	14.443531	A5
7.136297	14.811504	A5
8.072102	17.414418	A5
7.948745	11.478945	A5
8.380850	18.963267	A5
8.551694	16.387448	A5
8.500126	20.761779	A5
	};
    \addplot[lightgray, thick, domain=0:12] (x,x);
      \addplot[lightgray, thick, domain=0:12] (x,2*x);
    \legend{A1, A2, A3, A4}
	\end{axis}
\end{tikzpicture}
\end{subfigure}\hfill~
\begin{subfigure}{0.48\textwidth}
\begin{tikzpicture}[scale=0.75]
	\begin{axis}[legend pos=south east,%
    xmax=10, xmin=1, ymax=25, ymin=1, 
    title={\large \textbf{ML Estimator}},
  xlabel={$\frac{1}{2}\operatorname{Kersize}(F,\mathcal{M}_1,\mathcal{E},2)_1$ [nm]},
	scatter/classes={%
		A2={mark=o,blue},%
		A3={mark=o,cyan},
        A4={mark=o,Aquamarine},
        A5={mark=o,NavyBlue}}%
        ]
	\addplot[scatter,only marks,%
		scatter src=explicit symbolic]%
	table[meta=label] {
x    y label
8.059731	15.737762	A5
8.409681	17.107240	A5
9.037278	22.057356	A5
7.601414	15.979494	A5
8.978950	13.153956	A5
8.342168	12.887188	A5
8.111467	22.074787	A5
9.167278	23.180081	A5
8.756119	13.037006	A5
7.829482	26.424774	A5
8.168964	14.849294	A5
7.491770	13.612145	A5
7.651813	14.395350	A5
8.336915	13.710389	A5
8.090838	20.069373	A5
8.441580	13.521588	A5
7.756363	15.767252	A5
7.930849	13.635500	A5
8.588233	15.074651	A5
7.136297	12.006530	A5
8.072102	15.304982	A5
7.948745	14.185506	A5
8.380850	12.865200	A5
8.551694	18.949329	A5
8.500126	16.875562	A5
4.481023	8.020135	A4
4.587538	7.704862	A4
4.109176	7.188476	A4
4.351643	7.690474	A4
4.030238	10.294590	A4
4.602496	7.727725	A4
4.490561	7.262242	A4
4.715957	7.675811	A4
4.113106	6.417126	A4
4.578288	11.360378	A4
4.719583	7.890722	A4
4.261239	6.554834	A4
4.546144	6.787838	A4
4.457560	6.688488	A4
4.462218	7.303681	A4
4.220948	7.338186	A4
4.628749	6.984971	A4
4.321630	6.396882	A4
4.679321	8.410396	A4
4.224017	7.791289	A4
4.669143	6.917923	A4
4.378358	6.315967	A4
4.310618	12.620811	A4
4.254440	9.528368	A4
4.969526	10.846077	A4
2.729674	7.559779	A3
3.030944	4.367353	A3
2.716648	5.429928	A3
2.849334	5.893497	A3
2.822844	7.202550	A3
2.821014	4.925721	A3
2.848923	4.921155	A3
2.747234	5.634320	A3
2.618895	3.882701	A3
2.918310	5.071361	A3
2.880222	9.290752	A3
2.859253	4.174634	A3
2.937669	7.853580	A3
2.637573	4.225218	A3
2.632219	3.851072	A3
2.958203	7.910931	A3
2.885864	5.222923	A3
2.678167	5.489237	A3
3.020559	5.431171	A3
2.722384	4.224715	A3
2.670885	7.094362	A3
2.766845	5.425106	A3
2.742253	6.282712	A3
2.804215    5.336000	A3
2.751720	4.233903	A3
1.410391	2.273644	A2
1.262702	2.271681	A2
1.470487	2.465922	A2
1.440023	2.304377	A2
1.241982	2.468190	A2
1.265349	3.090156	A2
1.318187	2.870934	A2
1.434148	2.088134	A2
1.400375	2.622067	A2
1.362142	2.192143	A2
1.350037	2.862813	A2
1.429750	3.028116	A2
1.451891	2.354567	A2
1.267500	3.371723	A2
1.228678	2.086104	A2
1.381924	3.177305	A2
1.482610	2.625466	A2
1.254132	2.488293	A2
1.170161	1.881632	A2
1.425620	3.046927	A2
1.334797	2.698440	A2
1.244329	2.282892	A2
1.167626	1.790571	A2
1.553520	3.272534	A2
1.181366	1.685829	A2
	};
    \addplot[lightgray, thick, domain=0:12] (x,x);
      \addplot[lightgray, thick, domain=0:12] (x,2*x);
    \legend{A1, A2, A3, A4}
	\end{axis}
\end{tikzpicture}
\end{subfigure}
\vspace{-3pt}
\caption{RMSE of the $(x,y)-$location from different estimators against half of the average kernel size computed with feasible sets of size $N(1)= 1501$ with $K=1$ for $25$ different noisy measurements per microscope set-up $\{A_1,A_2,A_3,A_4\}$. The lower and upper gray lines ($y=x$ and $y=2x$) visualize the lower and upper accuracy bounds. All RMSE values are above the computed lower accuracy bounds and validate Theorem \ref{thm:lowerboundapprx}, $(i)$. The mean and median estimator's RMSE are below the upper accuracy bound. The different microscope set-ups $\{A_1,A_2,A_3,A_4\}$ highlight the increase of the kernel size with worsening imaging settings - a higher background photon flux and decreasing particle photon emission rate.} \label{fig:estimatorvskerfscm_mic}
\vspace{-5pt}
\end{figure}

For fluorescence microscopy \eqref{eq:samplingmicro} any method providing an estimator of $\hat{\Theta}$ is an approximate inverse map $\hat{\Theta}: \mathcal{M}_2 \rightarrow \mathbb{R}^5$ taking noisy measurements $w \in \mathcal{M}_2$ as input and producing approximations to the location and background photon flux and particle photon emission rate $\Theta$, $\phi(w) = \hat{\Theta}$. We adapt the algorithm for allocating samples to the feasible sets Algorithm \ref{alg:feasset} for the particular forward model of  \eqref{eq:forwardmicro}. The condition of assigning samples to feasible sets $F(\Theta,e)= w(\Theta,e)$, is met by obtaining MCMC samples where the noise samples are within the noise model. Details on the MCMC sampling are given in Section \textbf{SM} $3.3$. We use samples from four imaging set-ups, where the different set-ups $\{A_1,A_2,A_3,A_4\}$ correspond to different background photon fluxes and particle photon emission rates. The best set-up is $A_1$, which has a low background photon flux and a high particle photon emission rate, and the worst is $A_4$. Using these samples, we compute the average kernel size using Algorithm \ref{alg:feasset} and the RMSE loss for different estimators. For each set-up $\{A_1,A_2,A_3,A_4\}$ we consider $K=25$ different noisy measurements $\{w_k\}_{k=1}^{25} \subset \mathcal{M}_2 = F(\mathcal{M}_1\times\mathcal{E})$. For each example measurement, we approximate the feasible sets with $N(25) = 1501$ MCMC samples. We use a pseudo metric induced by a pseudo-norm - the $\ell_2$ norm composed with the projection onto the first two spatial components $\pi_{1,2}(\Theta) = \Theta_{x,y} =(x,y)$. To validate the lower accuracy bound of Theorem \ref{thm:lowerboundapprx}, we compute the RMSE loss $\mathcal{L}(\mathcal{D}_M,\phi,2)$ from \eqref{eq:empiricalerror} for the mean, median, maximum a posteriori, and maximum likelihood estimators on the input datasets and the output datasets of Algorithm \ref{alg:feasset} in Table \ref{tab:lossfscm}. The scatter plots of the RMSE against half the average kernel size in Figure \ref{fig:estimatorvskerfscm_mic} validate Theorem \ref{thm:lowerboundapprx} for each measurement and microscope set-up. The mean and median estimators are consistently optimal and within the accuracy bounds of Theorem \ref{thm:lowerboundapprx}. A convergence study of the number of samples in the feasible set is provided in Figure \textbf{SM} $2$.

\subsection{Accuracy bounds for super-resolution of multi-spectral satellite data}\label{sec:satsr}
Single image super-resolution converts a low-resolution image with coarse details to a corresponding high-resolution image with higher visual quality and refined details. Even if prior information is utilized, there can exist multiple high-resolution solutions for the same low-resolution image. Thus, the inverse problem of super-resolution satisfies our central assumption \eqref{eq:centralassumption}. Assessing the quality of the output is not straightforward, there are many different quantitative metrics - such as the peak signal to noise ratio (PSNR) or the structural similarity index (SSIM) - and these metrics only loosely correlate to human perception \cite{anwar2020deep}. Another key challenge is to determine the limit to the super-resolution factor. This is difficult in super-resolution of multi-spectral satellite data due to dataset sizes and the resulting need for quantitative methods. A resulting challenge is to choose the optimal fraction between high and low resolution for different sensors and selections of spectral bands. This question is unanswered. In \cite{atkinson1997choosing}, the question of what the best spatial resolution for obtaining the optimum accuracy on a down-stream task is, such as land cover classification, is tackled. The authors \cite{atkinson1997choosing} claim that the spatial resolution with the maximal local variance is optimal. The underlying assumption is that scenes are assumed to be composed of objects with different reflectance values on a continuous background and that if the spatial resolution is that of the objects the local variance will be maximal \cite{curran1987airborne}. In \cite{mercer1911experimental}, as early as 1911, and \cite{smith1938empirical} in 1938, the authors empirically investigate the relation between local variance and spatial resolution. Theorem \ref{lem:lemmasym} provides a lower bound depending on the super-resolution factor to the super-resolution error of any method on a given dataset. The best super-resolution methods on a given dataset can be determined with the upper bound of Theorem \ref{lem:lemmasym}. In the following, we only compute the RMSE to validate the accuracy bounds provided by Theorem \ref{lem:lemmasym}.

\subsubsection{Spatial super-resolution of multi-spectral satellite data as an inverse problem}

\begin{figure}[ht]
    \centering
    \begin{subfigure}{0.3\textwidth}
        \centering
        \includegraphics[width=\textwidth]{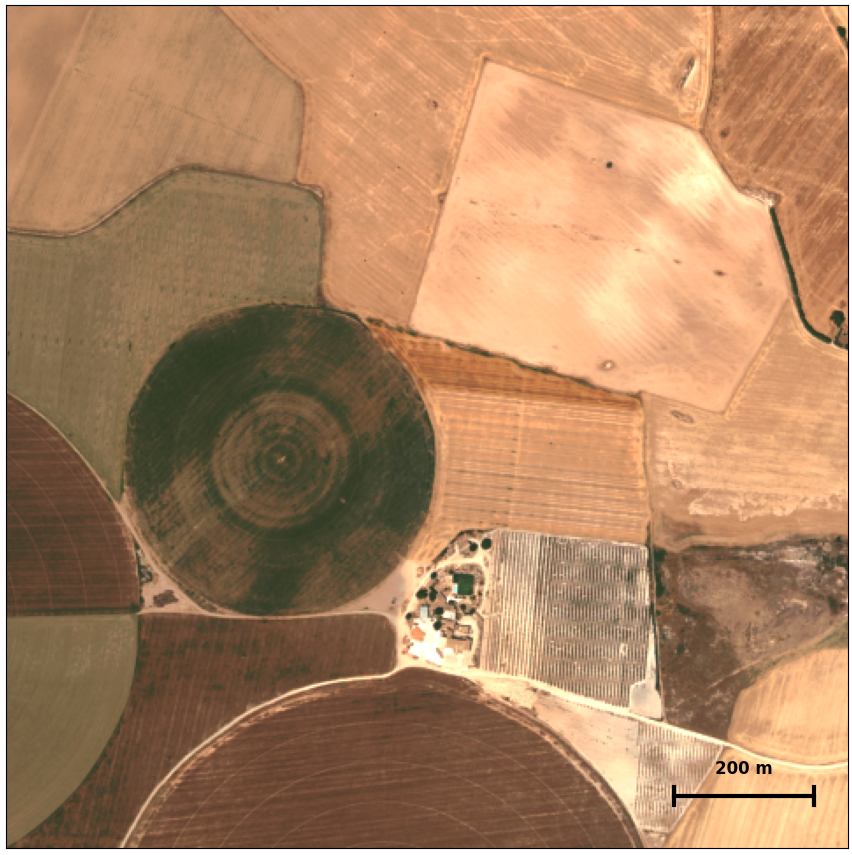}
        \caption{High Resolution}
    \end{subfigure}\hfill
    \begin{subfigure}{0.3\textwidth}
        \centering
        \includegraphics[width=\textwidth]{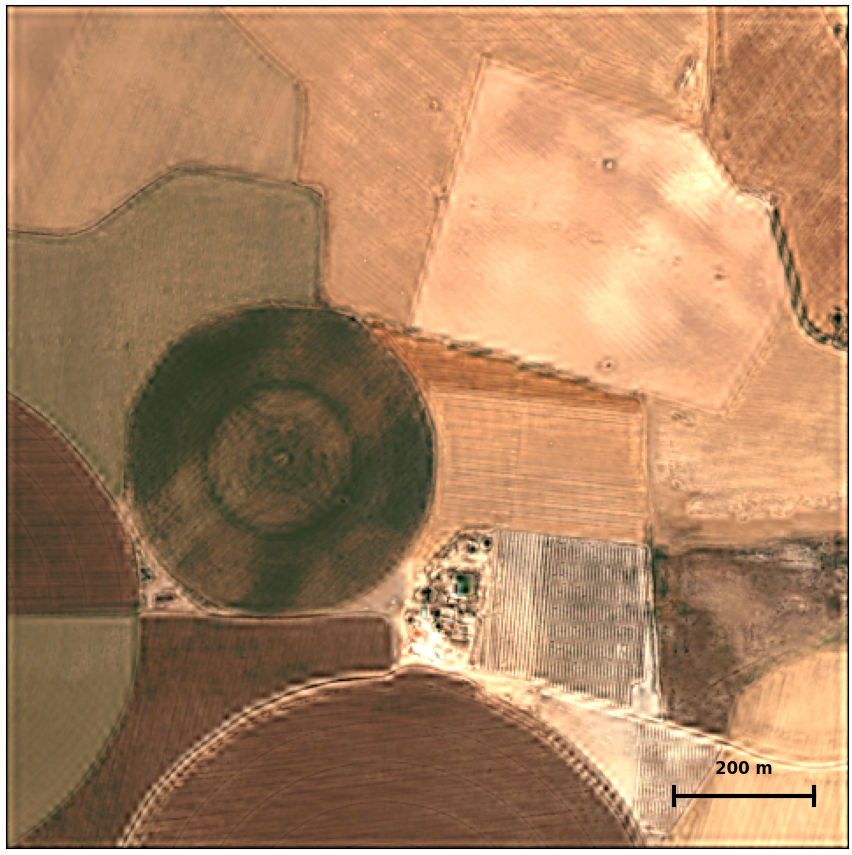}
        \caption{Reflected Version}
    \end{subfigure}\hfill
    \begin{subfigure}{0.3\textwidth}
        \centering
        \includegraphics[width=\textwidth]{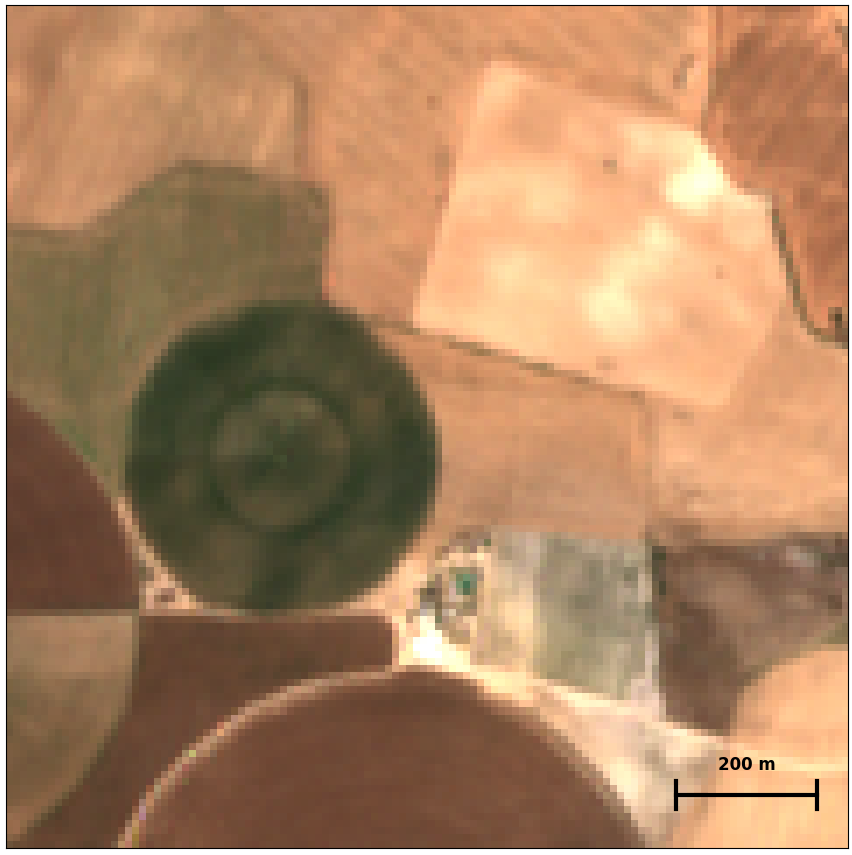}
        \caption{Low Resolution}
    \end{subfigure}
    \vspace{-5pt}
    \caption{RGB example visualizations of different versions of a multi-spectral satellite Croplands image from the NAIP dataset.
    The images correspond to (a) the original high-resolution image, (b) a high-resolution version reflected around the orthogonal component of the down-sampling map's kernel (according to \ref{eq:reflected_null}), and (c) the low-resolution image computed with the down-sampling operator. 
    }\label{fig:lemsymvalid}
    \vspace{-5pt}
\end{figure}

In super-resolution of multi-spectral satellite data the aim is to enhance spatial resolution of multi-dimensional arrays encoding the reflectance values of different spectral bands at possibly different spatial resolutions. For fourfold super-resolution, the task is to obtain a higher resolution satellite image $x \in [0,r_{max}]^{d^s \times 4d^x_1 \times 4d^y_1}$ where $d^s \in \mathbb{N}$ is the number of spectral bands, $d^x_1,d^y_1 \in \mathbb{N}$ are the number of pixels, and $r_{max} >0$ is the maximum reflectance value, from a lower resolution image $y \in [0,r_{max}]^{d^s \times d^x_1 \times d^y_1}$. We consider the simplest possible forward model that band-wise maps the higher resolution to lower resolution data with bilinear interpolation and additive noise $y_s = DS(x_s)+e_s$, for $s \in \{1, \dots, d^s\}$ where $e_s \in \mathcal{B}(0,\mathds{1}\epsilon) \subset \mathbb{R}^{d^x_1\times d^y_1}$, $x_s \in [0,r_{max}]^{ 4d^x_1 \times 4d^y_1}$ and $y_s \in [0,r_{max}+\epsilon]^{ d^x_1 \times d^y_1}$. For this down-sampling, each pixel in the output only depends on its neighboring pixels within a distance of two pixels. Hence, the set $\mathcal{M}_1 = [0,r_{max}]^{d^s \times 4d^x_1 \times 4d^y_1}$ and the forward model are given by
\begin{align}
\begin{split}
F:  \mathcal{M}_1\times \mathcal{B}(0,\mathds{1}\epsilon) \rightarrow  [0,r_{max}+\epsilon]^{d^s \times d^x_1 \times d^y_1},
(x,e) \mapsto F(x,e) = DS(x)+e.
\end{split}
\end{align}
As the inverse problem of super-resolution satisfies our central assumption \eqref{eq:centralassumption}, we validate the lower bound Theorem \ref{lem:lemmasym} by computing the average symmetric kernel size and the loss in the symmetrized dataset.

\subsubsection{Numerical experiments}

\begin{figure}
\begin{subfigure}{0.5\linewidth}
\begin{tikzpicture}[scale=0.75]
	\begin{axis}[legend pos=north west,%
		xmax=250, xmin=35, ymax=500, ymin=35, 
		title={\large \textbf{Diffusion (OpenSR)}},
		xlabel={$\frac{1}{2}\operatorname{SKersize}(F,\mathcal{M}_1,\mathcal{E},2)_3$},
		ylabel={RMSE},
        scatter/classes={%
		Forest={mark=pentagon*,green},%
        Cropland={mark=triangle*,cyan},
        Urban={mark=otimes*,magenta},
        Others={mark=star,darkgray}}%
	]
	\addplot[scatter,only marks, scatter src=explicit symbolic]%
	table[meta=label]{
       LB       Loss          label
118.776195 145.660616         Others
100.006142 120.225405         Others
 91.496606 120.193823         Others
240.898933 304.997557         Others
189.684670 251.136039         Others
108.569042 135.888302         Others
178.122174 212.396914         Others
152.557799 194.873139         Others
 85.977114 110.928741         Others
 88.185266 116.604161         Others
 58.527757  76.994133         Others
 85.735821 110.164691         Others
 56.523410  75.720327         Others
175.551602 219.993541         Others
242.515715 302.915154         Others
 38.882530  57.188992         Others
240.557530 291.083078         Others
167.389799 219.036430         Others
 46.881412  65.297628         Others
110.184685 132.084443         Others
145.997537 176.845585         Others
165.395087 198.866157         Others
108.240378 136.786986         Others
286.815001 367.034187         Others
 72.982413  98.217417         Others
 75.083039 101.320676         Others
 65.413678  88.108411         Others
 69.696176  91.140514         Others
 47.772405  66.708656         Others
204.872401 265.472280         Others
136.795184 175.101374         Others
173.073231 212.652431         Others
171.940190 213.182425         Others
304.416480 402.805783         Others
 65.700522  90.784001         Others
129.760583 161.142936         Others
134.981144 177.669098         Others
 54.081988  76.428172         Others
214.407902 272.366507         Others
105.835308 134.299477         Others
 77.598541  99.104460         Others
155.862804 196.444574         Others
 68.997226 100.322755         Others
167.816763 211.278817         Others
 47.036873  72.756614         Others
 73.208827  96.030075         Others
 79.407167 107.344393         Others
 57.582421  78.076579         Others
 94.557467 117.989619         Others
163.382727 204.875725         Others
112.295893 165.721966         Others
 50.880598  67.726197         Others
177.399702 195.450820         Others
 82.259886 100.806937         Others
190.636313 234.538209         Others
 86.359829 119.966266         Others
176.921216 226.718000         Others
186.322404 239.882608         Others
219.675988 276.317429         Others
 66.074697  87.587763         Others
 74.982719 103.053805         Others
108.687637 135.864990         Others
209.188231 259.367170         Others
152.976863 194.354899         Others
 61.082422  81.114746         Others
142.322298 177.235026         Others
111.497350 139.629329         Others
 85.814564 115.078895         Others
 57.731778  77.202309         Others
117.433654 140.774125         Others
147.999350 182.669169         Others
 63.332780  81.800722         Others
 39.274972  53.905652         Others
100.599824 134.031513       Cropland
145.030489 185.106451       Cropland
 56.601254  83.789155       Cropland
141.021908 200.245735       Cropland
 88.240810 115.444779       Cropland
113.018706 134.503804       Cropland
132.476743 166.838586       Cropland
 82.606945 109.120829       Cropland
188.535344 207.733978       Cropland
 54.112550  73.998871       Cropland
155.338855 199.199123       Cropland
 72.502713  98.135470       Cropland
 72.102781  99.060175       Cropland
155.743386 201.372079       Cropland
151.871341 197.289584       Cropland
 87.139443 111.870148       Cropland
155.520907 206.328144       Cropland
115.380789 137.533554       Cropland
155.577315 193.565854       Cropland
321.386267 437.516332          Urban
235.967037 294.857851          Urban
240.502203 305.305646          Urban
207.139647 261.726858          Urban
209.342528 265.978396          Urban
258.290263 320.505935          Urban
258.932385 327.520081          Urban
229.265782 298.630695          Urban
197.822177 246.434883          Urban
212.093846 259.001190          Urban
230.316734 290.682303          Urban
 54.812009  65.763938         Forest
 95.334090 116.075313         Forest
 67.164759  84.915185         Forest
 76.883237  90.710874         Forest
 81.188124  99.360228         Forest
 66.642685  81.129763         Forest
 90.683465 110.114391         Forest
 57.575692  72.706449         Forest
 77.173210  95.448274         Forest
100.838304 124.462494         Forest
	};
	\addplot[lightgray, thick, domain=0:300] (x,x);
	\addplot[lightgray, thick, domain=0:300] (x,2*x);
    \legend{Forest, Cropland, Urban, Others}
	\end{axis}
\end{tikzpicture}
\end{subfigure}
\hfill
\begin{subfigure}{0.48\linewidth}
\begin{tikzpicture}[scale=0.75]
	\begin{axis}[legend pos=north west,%
		xmax=250, xmin=35, ymax=500, ymin=35, 
		title={\large \textbf{Bilinear Interpolation}},
		xlabel={$\frac{1}{2}\operatorname{SKersize}(F,\mathcal{M}_1,\mathcal{E},2)_3$},
        scatter/classes={%
		Forest={mark=pentagon*,green},%
        Cropland={mark=triangle*,cyan},
        Urban={mark=otimes*,magenta},
        Others={mark=star,darkgray}}%
	]
	\addplot[scatter,only marks, scatter src=explicit symbolic]%
	table[meta=label]{
       LB       Loss          label
108.569042 124.194065         Others
152.557799 174.025519         Others
 85.977114  99.984144         Others
 88.185266 108.558874         Others
 58.527757  67.819137         Others
240.898933 281.151331         Others
 85.735821  99.176491         Others
175.551602 207.678398         Others
242.515715 279.200687         Others
 38.882530  47.625707         Others
240.557530 281.765832         Others
167.389799 194.512760         Others
 46.881412  56.318289         Others
165.395087 184.617574         Others
108.240378 124.884172         Others
286.815001 335.595657         Others
 72.982413  84.386428         Others
 75.083039  90.390958         Others
167.816763 200.202568         Others
 47.036873  57.908550         Others
 65.413678  78.192276         Others
 69.696176  83.847804         Others
 47.772405  57.406278         Others
204.872401 239.084824         Others
171.940190 203.976198         Others
304.416480 352.315914         Others
 65.700522  86.529473         Others
129.760583 149.923074         Others
 54.081988  64.834162         Others
214.407902 253.520507         Others
105.835308 126.056470         Others
 77.598541  85.790021         Others
155.862804 182.738081         Others
 68.997226  84.426953         Others
 73.208827  86.147567         Others
118.776195 135.212820         Others
100.006142 113.992485         Others
 91.496606 112.650552         Others
189.684670 215.775068         Others
178.122174 199.991184         Others
 56.523410  67.506450         Others
145.997537 164.914127         Others
110.184685 120.259235         Others
136.795184 156.531992         Others
173.073231 204.907300         Others
134.981144 162.078190         Others
 94.557467 112.905370         Others
 50.880598  62.812887         Others
 66.074697  81.090285         Others
 61.082422  72.297151         Others
111.497350 124.289172         Others
 57.731778  69.071394         Others 
 79.407167  96.054927         Others
 57.582421  69.806044         Others
163.382727 188.037329         Others
112.295893 143.363621         Others
177.399702 187.422901         Others
 82.259886  94.716472         Others
190.636313 210.727406         Others
 86.359829 104.039142         Others
176.921216 205.670995         Others
186.322404 222.542555         Others
219.675988 251.350089         Others
 74.982719  91.655332         Others
108.687637 121.938173         Others
209.188231 236.630182         Others
152.976863 181.920651         Others
142.322298 164.215014         Others
 85.814564 101.764364         Others
117.433654 133.883957         Others
147.999350 176.313032         Others
 63.332780  76.790585         Others
 39.274972  49.004403         Others
 72.102781  85.125673       Cropland
100.599824 122.537125       Cropland
145.030489 165.118335       Cropland
 56.601254  68.364994       Cropland
 88.240810 106.934463       Cropland
113.018706 124.253749       Cropland
141.021908 171.107515       Cropland
132.476743 151.319008       Cropland
155.743386 185.256735       Cropland
151.871341 177.291241       Cropland
 87.139443 100.972337       Cropland
155.520907 183.195792       Cropland
115.380789 127.671767       Cropland
 82.606945  96.649588       Cropland
188.535344 197.435404       Cropland
 54.112550  67.745898       Cropland
155.338855 180.825034       Cropland
 72.502713  86.988040       Cropland 
155.577315 180.078886       Cropland
 95.334090 108.653237         Forest
 54.812009  61.134464         Forest
 57.575692  65.270550         Forest
100.838304 112.932586         Forest
 66.642685  74.774727         Forest
 67.164759  75.697242         Forest
 76.883237  84.927679         Forest
 90.683465 100.577141         Forest
 81.188124  91.428014         Forest
 77.173210  87.419460         Forest
235.967037 279.071687          Urban
321.386267 381.975837          Urban
209.342528 252.533954          Urban
229.265782 275.399112          Urban
207.139647 241.728584 	       Urban
240.502203 289.456526          Urban
258.290263 300.093734          Urban
258.932385 296.458376          Urban
197.822177 227.490419          Urban
212.093846 246.655430          Urban
230.316734 272.571128          Urban
	};
	\addplot[lightgray, thick, domain=0:300] (x,x);
	\addplot[lightgray, thick, domain=0:300] (x,2*x);
      \legend{Forest, Cropland, Urban, Others}
	\end{axis}
\end{tikzpicture}
\end{subfigure}
\vspace{5pt}\\
\begin{subfigure}{0.5\linewidth}
\begin{tikzpicture}[scale=0.75]
	\begin{axis}[legend pos=north west,%
		xmax=250, xmin=35, ymax=500, ymin=35, 
		title={\large\textbf{Bicubic Interpolation}},
		xlabel={$\frac{1}{2}\operatorname{SKersize}(F,\mathcal{M}_1,\mathcal{E},2)_3$},
		ylabel={RMSE},
        scatter/classes={%
		Forest={mark=pentagon*,green},%
        Cropland={mark=triangle*,cyan},
        Urban={mark=otimes*,magenta},
        Others={mark=star,darkgray}}%
	]
	\addplot[scatter,only marks, scatter src=explicit symbolic]%
table[meta=label]{
       LB       Loss          label    
 73.208827  82.200890         Others
 79.407167  91.151658         Others
 57.582421  66.302714         Others
163.382727 181.069861         Others
112.295893 133.510337         Others
 54.081988  62.017190         Others
214.407902 241.056509         Others
105.835308 119.754004         Others
 77.598541  83.365460         Others
155.862804 174.429256         Others
 68.997226  79.831327         Others
167.816763 189.872204         Others
 47.036873  54.534067         Others
177.399702 184.707479         Others
 82.259886  91.173949         Others
190.636313 204.659904         Others
 86.359829  98.466002         Others
176.921216 196.733883         Others
186.322404 211.610762         Others
219.675988 241.905097         Others
 74.982719  86.507005         Others
108.687637 118.615064         Others
209.188231 228.348040         Others
152.976863 172.615089         Others
175.551602 197.458427         Others
242.515715 267.448494         Others
 38.882530  45.599256         Others
240.557530 268.886865         Others
167.389799 186.257388         Others
 46.881412  54.240680         Others
 85.735821  95.079974         Others
165.395087 179.042141         Others
108.240378 120.307229         Others
286.815001 320.510945         Others
 72.982413  81.179610         Others
 75.083039  85.788007         Others
 65.413678  74.320007         Others
 69.696176  79.478867         Others
 47.772405  54.960163         Others
204.872401 228.677401         Others
171.940190 193.966816         Others
304.416480 336.578069         Others
 65.700522  79.737738         Others
129.760583 144.741756         Others
142.322298 157.924769         Others
 85.814564  97.606769         Others
117.433654 130.005316         Others
147.999350 167.098465         Others
 63.332780  72.577912         Others
 39.274972  46.845833         Others
 58.527757  65.228945         Others
152.557799 167.749960         Others
 85.977114  95.568483         Others
 88.185266 103.117468         Others
108.569042 120.010467         Others
240.898933 268.297844         Others
136.795184 150.832950         Others
173.073231 195.068548         Others
145.997537 159.468848         Others
 56.523410  64.502966         Others
110.184685 118.159406         Others
 91.496606 105.764200         Others
118.776195 130.892530         Others
100.006142 110.585734         Others
189.684670 207.732662         Others
134.981144 153.938611         Others
178.122174 193.670193         Others
 57.731778  66.111155         Others
 94.557467 107.677955         Others
111.497350 121.268285         Others
 50.880598  59.801589         Others
 66.074697  76.932252         Others
 61.082422  69.351628         Others
 72.102781  81.798982       Cropland
 88.240810 101.656269       Cropland
113.018706 121.844590       Cropland
155.520907 175.195760       Cropland
141.021908 161.735087       Cropland
115.380789 124.807875       Cropland
132.476743 145.834179       Cropland
 54.112550  63.801497       Cropland
155.338855 172.918219       Cropland
188.535344 195.473588       Cropland
 82.606945  92.483541       Cropland
151.871341 169.535644       Cropland
 100.599824 115.753491      Cropland
145.030489 159.407763       Cropland
 56.601254  65.091888       Cropland
 72.502713  82.595779       Cropland
155.743386 175.988999       Cropland
 87.139443  97.160474       Cropland
 54.812009  59.933917       Forest
 67.164759  73.596273       Forest
 57.575692  63.493185       Forest
 90.683465  97.978837       Forest
  95.334090 104.816478      Forest
 66.642685  72.838795       Forest
 76.883237  82.987991       Forest
 77.173210  84.705743       Forest
100.838304 109.568404       Forest
 81.188124  88.743814       Forest
321.386267 361.719273       Urban
209.342528 238.714061       Urban
235.967037 265.317362       Urban
229.265782 260.694959       Urban
197.822177 218.291440       Urban
230.316734 259.231363       Urban
212.093846 236.173486       Urban
155.577315 172.498353       Cropland
240.502203 273.857303       Urban
258.932385 284.775297       Urban
258.290263 286.875876       Urban
207.139647 230.802442       Urban
};
	\addplot[lightgray, thick, domain=0:300] (x,x);
	\addplot[lightgray, thick, domain=0:300] (x,2*x);
    \legend{Forest, Cropland, Urban, Others}
	\end{axis}
\end{tikzpicture}
\end{subfigure}
\hspace{10pt}
\begin{minipage}{0.44\linewidth}
    \captionsetup{width=\linewidth}\vspace{-6.2cm}
    \captionof{table}{RMSE for different methods on the two datasets and the corresponding lower accuracy bound - half of the average symmetric kernel size.    
    }\label{tab:losssrnaip}
    \begin{tabular}{l|c|c}
    \hline
    Dataset Size & $119$ & $6 \times 119$ \\
    \hline
    Diffusion & $165.87$ & $184.07$  \\
    Bilinear & $169.00$ & $168.72$ \\
    Bicubic & $160.43$  & $161.43$ \\
     \hline
    \hline
    $\frac{1}{2}\operatorname{SKersize}$  & & $144.92$ \\
    \hline
    \end{tabular}
\end{minipage}
\caption{RMSE of different super-resolution methods against the average symmetric kernel size for triplets, $M=3$, of $119$ of images. The lower and upper gray lines ($y=x$ and $y=2x$) visualize the lower and upper accuracy bounds.  All RMSE values are within the computed accuracy bounds. The separation into urban, cropland and forest images highlight the increase of the kernel size with increasing detail content of the images. The columns in Table \ref{tab:losssrnaip} show the RMSE for different super-resolution methods on the dataset with $M = 119$ and with $2M = 6\times119$ images. The output of Algorithm \ref{alg:feasset}, with feasible sets of size \(N(K) = 3\), is used in addition to Theorem \ref{lem:lemmasym} to obtain the enlarged dataset. }
\label{fig:estimatorvskerfscm_eo}
\vspace{-5pt}
\end{figure}

\noindent We compute the average symmetric kernel size from Theorem \ref{lem:lemmasym}. Here we use the Moore-Penrose inverse to compute the projection onto the kernel of the down-sampling forward model, see Section \textbf{SM} $5$. In addition to our library, we use the computational framework to evaluate methods for super-resolution of multi-spectral satellite data from \cite{SR2024Ces}, see Section \textbf{SM} $4.3$ for further details. We use four different multi-spectral satellite image datasets from the \href{https://huggingface.co/datasets/isp-uv-es/opensr-test}{opensr-test} 
benchmark dataset collection: one from the National Agriculture Imagery Program (NAIP), one from SPOT imagery that was obtained from the worldstat dataset and the SPAIN- URBAN and CROPS datasets. The datasets consist of high-resolution aerial imagery at $4.5$ m ground sampling distance captured in the visible and near-infrared spectrum (RGBNIR) and all Sentinel-2 L1C and L2A spectral bands. The satellite images cover diverse land cover classes ranging from urban areas over crop fields to forests. We compute the RMSE loss $\mathcal{L}(\mathcal{D}_M,\phi,2)$ from \eqref{eq:empiricalerror} for  different super-resolution maps to validate the lower accuracy bound of Theorem \ref{lem:lemmasym}. We use the approximate inverse map introduced by \cite{TrustworthwSR2025}, referred to as Diffusion (Open SR) map in the following and bilinear upscaling and bicubic interpolations from the pytorch library \cite{paszke2019pytorch}. The scatter plots of the RMSE against half the average symmetric kernel size in Figure \ref{fig:estimatorvskerfscm_eo} validate the lower bound of Theorem \ref{lem:lemmasym} for each super-resolution map, each low-resolution measurement and land cover class. The assumption that the satellite image reflected around the orthogonal component of the kernel of the forward model is a realistic image in Theorem \ref{lem:lemmasym} is visually validated in Figure \ref{fig:lemsymvalid}. All considered super-resolution maps are consistently optimal and within the accuracy bounds of Theorem \ref{lem:lemmasym}. We compute the RMSE on the whole input and output datasets of Algorithm \ref{alg:feasset} in Table \ref{tab:losssrnaip}. Table \ref{tab:losssrnaip} shows that the RMSE remains consistently in the same range for different land cover classes and super-resolution methods across the input and output datasets. In Section \textbf{SM} $4.4$ we show that the RMSE on the dataset of the selected methods is in the range of the RMSE obtained in previous work. By computing the average symmetric kernel size, we can provide a theoretically sound and empirically validated bound to modeling accuracy. These accuracy bounds, can help determine the optimal spatial and temporal resolution of earth observation images, which are bottlenecks in downstream applications \cite{benediktsson2012very,quattrochi1997scale,chen2023large,kong2023super}. 
\section{Convergence under additional assumptions}\label{sec:additionalassconv}

To obtain convergence guarantees for the average kernel size and the reconstruction error in \eqref{eq:empiricalerror} we need to make additional assumptions to the problem set-up. We now assume that
\begin{enumerate}
    \item the metrics $d_\mathcal{X}$, $d_\mathcal{Y}$, $d_\mathcal{Z}$ are chosen so that the topologies induced by the metrics are second countable, thus, they admit a countable base.
    \item the metrics $d_\mathcal{X}$ and $d_\mathcal{Z}$ satisfy the Heine-Borel property, i.e., that all closed and bounded sets are compact. 
    \item $(\mathcal{X}\times\mathcal{Z}, \mathcal{B}(\mathcal{X}\times\mathcal{Z}))$, $(\mathcal{M}_1\times\mathcal{E}, \mathcal{B}(\mathcal{M}_1\times\mathcal{E}))$, $(\mathcal{M}_2,\mathcal{B}(\mathcal{M}_2))$ are measurable spaces and that the forward model $F\colon \mathcal{M}_1\times\mathcal{E} \to \mathcal{M}_2$ is Borel measurable. We equip  $\mathcal{X}\times\mathcal{Z}$ with a finite Borel measure $\mu$ supported on $\mathcal{M}_1\times\mathcal{E}$, such that $\mu(\mathcal{M}_1\times\mathcal{E})>0$. We equip $\mathcal{M}_2$ with the pushforward measure $F_*\mu$ given by $(F_*\mu)(E) = \mu(F^{-1}(E))$ for every $E \in \mathcal{B}(\mathcal{M}_2)$. 
    \item that $\mathcal{M}_1$ is compact and $\mathcal{E}$ is a Borel set.
\end{enumerate}

\noindent Under these assumptions, the disintegration of measure $\mu^y$ of $\mu$ with respect to $F$ is a probability measure for $F_*\mu$-a.e. $y \in \mathcal{M}_2$ and exists, see  \cite[Proposition 7.1]{gottschling2023existence}. With the disintegration of the measure $\mu$, we can define the average kernel size on measurable metric spaces from \cite[Definition 3.8]{gottschling2023existence},
\begin{align}\label{eq:kersizegudd}
		\operatorname{kersize}^\text{a}&(F,\mathcal{M}_1, \mathcal{E}, p)^p\nonumber\\
        &= \int_{\mathcal{M}_2} \int_{F^{-1}(y)}  \int_{F^{-1}(y)} d_{\mathcal{X}}(x,x')^p \, d\mu^y(x,e) \ d\mu^y(x',e') \ d(F_*\mu)(y).
\end{align}
By approximating the inner, thereafter the outer integrals of the average kernel size from \cite[Definition 3.8]{gottschling2023existence}, with Monte Carlo integration, Theorem \ref{thm:convavgalgnon} provides guarantees for the convergence of the average kernel size in \eqref{eq:lowerboundogboiz} to a normalization of the average kernel size on measurable metric spaces. 

\begin{theorem}[Convergence of approximation to the average kernel size]\label{thm:convavgalgnon}
Let $p \in [1, \infty)$, $K \in \mathbb{N}$, $\{y_k\}_{k=1}^K \subset \mathcal{M}_2$ be i.i.d. samples from $\mu_F := \tfrac{1}{F_*\mu(\mathcal{M}_2)} F_*\mu$, and $\{(x_{k,n},e_{k,n})\}_{n=1}^{N(K)}$ be i.i.d. samples from $\mu^{y_k}$, such that $N(k)=N(K)$ for all $k \in \{1, \ldots,K\}$. Let $N(K) \to \infty$ whenever $K \to \infty$. Then, for $\delta > 0$, there is $c(K,\delta)$ such that with probability greater than or equal to $1-c(K,\delta)$ we have that 
    \begin{equation}\label{eq:convergence}
    \left|\operatorname{Kersize}(F,\mathcal{M}_1,\mathcal{E},p)_K^p- \tfrac{1}{(F_*\mu)(\mathcal{M}_2)}\operatorname{kersize}^\text{a}(F, \mathcal{M}_1, \mathcal{E},p)^p\right| \leq \delta,
\end{equation}
where $c(K,\delta) \to 0$ as $K \to \infty$. 
\end{theorem}

\noindent For fixed $\delta>0$ and $\eta\in(0,1)$, to guarantee that \eqref{eq:convergence} holds with probability at least $1-\eta$, it suffices to choose the number of outer samples $K$ and inner samples $N(K)$ such that
\[
K \ge \tfrac{2v_{\max}^2}{\delta^2}\log\!\left(\tfrac{4}{\eta}\right),
\qquad
N(K)
\ge \tfrac{2v_{\max}^2}{\delta^2}\log\!\left(\tfrac{4K}{\eta}\right),
\]
where $v_{\max} = \sup_{y \in \mathcal{M}_2}\sup_{x,x' \in F_y} d_{\mathcal{X}}(x,x')^p$. For single-valued measurable approximate inverse maps $\phi: \mathcal{M}_2 \rightarrow \mathcal{X}$, Lemma \ref{lem:avererrconv} shows that the empirical reconstruction error converges to the generalization error \cite{mohri2018foundations}, given by
\begin{align*}
\mathbb{E}_{\tilde{\mu}}\left(d_{\mathcal{X}}\Bigl(x,\phi(F(x,e))\Bigr)^p\right)=\int_{\mathcal{M}_1\times\mathcal{E}} d_{\mathcal{X}}\Bigl(x,\phi(F(x,e))\Bigr)^p
\,d\tilde{\mu}(x,e),
\end{align*}
\noindent with high probability as the number of samples from $\tilde{\mu}:=\frac{1}{\mu(\mathcal{M}_1\times\mathcal{E})}\mu$ increases.

\begin{lemma}[Convergence of the empirical approximation to the  generalization error]\label{lem:avererrconv}
Let $p \in [1,\infty)$, $\{(x_m,e_m)\}_{m=1}^M$ be i.i.d. samples drawn from the probability measure
$\tilde{\mu}$ on $\mathcal{M}_1\times\mathcal{E}$. Let $\phi: \mathcal{M}_2 \rightarrow \mathcal{X}$ be measurable and such that $u_{\max}:=\sup_{(x,e)\in \mathcal{M}_1\times\mathcal{E}}d_{\mathcal{X}}\Bigl(x,\phi(F(x,e))\Bigr)^p < \infty$.
Then, for sufficiently small $\delta>0$, there is $c(M,\delta)>0$, such that with probability greater than or equal to $1-c(M,\delta)$, we have that
\begin{align}
\Bigg|\tfrac{1}{M}\sum_{m=1}^Md_{\mathcal{X}}\Bigl(x_m,\phi(F(x_m,e_m))\Bigr)^p-\mathbb{E}_{\tilde{\mu}}\left(d_{\mathcal{X}}\Bigl(x,\phi(F(x,e))\Bigr)^p\right)\Bigg|\le \delta,
\end{align}
where $c(M,\delta)=2\exp\left(-\tfrac{2M\delta^2}{u_{\max}^2}\right) \to0$ as $M\to\infty$.
\end{lemma}

\noindent As a consequence of Theorem \ref{thm:convavgalgnon}, Lemma \ref{lem:avererrconv}, Theorem \ref{thm:lowerboundapprx} and \cite[Theorem 3.9]{gottschling2023existence} we can obtain the following diagram, Figure \ref{fig:comaprisondiag}, for an optimal measurable approximate inverse map $\phi: \mathcal{M}_2 \rightarrow \mathcal{X}$.

\begin{figure}[h!]
\centering
\begin{tikzpicture}[scale=0.85, transform shape, node distance=0.9cm,
    every node/.style={fill=white}, align=left,
	base/.style = {rectangle, draw=gray,
                           text centered, font=\small}  
    ]
      
      \node [base, rounded corners, xshift=-8.1cm] (sth) {$\delta>0$ distance\\ probability $\geq 1-c$};

      \node [base, rounded corners,  right of=sth, xshift = 5.3cm, yshift=0cm] (sthel) {$\delta>0$ distance \\ probability $\geq 1-c$};

       \node [base, rounded corners,  right of=sth, xshift = 10.7cm, yshift=0cm] (sthell) {$\delta>0$ distance \\ probability $\geq 1-c$};
      
      \node [base, above of=sth, xshift = 0cm, yshift=1.5cm] (fm) {$\operatorname{Kersize}(F,\mathcal{M}_1,\mathcal{E},p)_K$};
      
     \node [base, above of=sthel, xshift = 0cm, yshift=1.5cm] (fm2) {$\mathcal{L}(\mathcal{D}_M, \phi,p)$};

      \node [base, above of=sthell, xshift = 0cm, yshift=1.5cm] (fm3) {$\operatorname{Kersize}(F,\mathcal{M}_1,\mathcal{E},p)_K$};

      \node [base, below of=sth, yshift=-1.5cm] (im) {$\operatorname{kersize}^\text{a}(F, \mathcal{M}_1, \mathcal{E},p)$};
      
       \node [base, below of=sthel, yshift=-1.5cm] (im2) {$ \left(\int_{\mathcal{M}_1\times\mathcal{E}} d_{\mathcal{X}}\Bigl(x,\phi(F(x,e))\Bigr)^p
\,d\mu(x,e)\right)^{1/p}$};

 \node [base, below of=sthell, yshift=-1.5cm] (im3) {$\operatorname{kersize}^\text{a}(F, \mathcal{M}_1, \mathcal{E},p)$};

    \draw[thick, ->,color=black]  (fm) -- node {$\cdot \tfrac{1}{2} \leq $}(fm2);
    \draw[thick, ->,color=black]  (fm2) -- node {$ \leq $}(fm3);
	\draw[thick, ->,color=black]  (fm) -- node {\small{$\cdot (F_*\mu)(\mathcal{M}_2)^{1/p}$}}(sth);
	\draw[thick, ->,color=black]  (fm2) -- node {\small{$\cdot (F_*\mu)(\mathcal{M}_2)^{1/p}$}} (sthel);
    \draw[thick, ->,color=black]  (fm3) -- node {\small{$\cdot (F_*\mu)(\mathcal{M}_2)^{1/p}$}} (sthell);
	\draw[thick, ->,color=black]  (sthel) -- node {$c \to 0$ \\ $M \to \infty$} (im2);
	\draw[thick, ->,color=black]  (sth) -- node {$c \to 0$ \\ $K \to \infty$} (im);
    \draw[thick, ->,color=black]  (sthell) -- node {$c \to 0$ \\ $K \to \infty$} (im3);
    \draw[thick, ->,color=black]  (im) -- node {$\cdot \tfrac{1}{2} \leq $}(im2);
    \draw[thick, ->,color=black]  (im2) -- node {$ \leq $}(im3);
\end{tikzpicture}
\caption{Diagram under the additional assumptions in this section and under the conditions of Theorem \ref{thm:convavgalgnon} and Lemma \ref{lem:avererrconv}. By \cite[Theorem 3.9]{gottschling2023existence} the lower horizontal axis holds true. By Theorem \ref{thm:lowerboundapprx} for $M=\sum_{k=1}^KN(k)$ the upper horizontal axis holds true. Note that $(F_*\mu)(\mathcal{M}_2)=\mu(F^{-1}(\mathcal{M}_2))=\mu(\mathcal{M}_1\times\mathcal{E})$. And by Theorem \ref{thm:convavgalgnon}, the left and right, vertical axes hold true and by Lemma \ref{lem:avererrconv} the middle vertical axis.}\label{fig:comaprisondiag}
\end{figure}

\noindent Figure \ref{fig:comaprisondiag} provides a conceptual comparison between Theorem \ref{thm:lowerboundapprx} and \cite[Theorem 3.9]{gottschling2023existence}. A distinction between the two results is the amount of structure required for their proofs. Theorem \ref{thm:lowerboundapprx} does not require the existence of a disintegration of measure and therefore does not rely on assumptions $1$-$4$ of this section, whereas \cite[Theorem 3.9]{gottschling2023existence} depends on these assumptions. The convergence result for the average kernel size on measurable metric spaces requires equal sample numbers on the feasible sets. In contrast to \cite[Theorem 3.9]{gottschling2023existence}, Theorem \ref{thm:lowerboundapprx} is a geometric-analytic statement that holds for arbitrary feasible set sizes and becomes non-trivial solely through the non-injectivity of the forward model and does not rely on Monte Carlo sampling or probabilistic approximation arguments. Theorem \ref{thm:lowerboundapprx} preserves the inequalities for all samples, whereas with a Monte Carlo approximation for \cite[Theorem 3.9]{gottschling2023existence} this is not necessarily valid with probability one. The previous results also show that the finite-sample lower bound from Theorem \ref{thm:lowerboundapprx} imposes an asymptotically necessary probabilistic constraint on convergence of the empirical reconstruction error to the generalization error. 

\begin{corollary}[Asymptotically necessary condition for convergence in probability of the empirical error to the generalization error]
\label{thm:finitesampleconsistency}
Assume the hypotheses of Theorem~\ref{thm:convavgalgnon}
and Lemma~\ref{lem:avererrconv}. Let \begin{align*}
\mathcal{E}(\phi):=\mathbb{E}_{\tilde{\mu}}\left(d_{\mathcal X}\bigl(x,\phi(F(x,e))\bigr)^p\right)^{1/p}, \text{ and } \Gamma_K:=\frac12\operatorname{Kersize}(F,\mathcal M_1,\mathcal E,p)_K,
\end{align*}
for $K \in \mathbb{N}$. Assume that $\left|\mathcal{L}(\phi,\mathcal D_M,p)-\mathcal{E}(\phi)\right|\to 0$ in probability as $M\to\infty$. Then, for $\varepsilon>\delta>0$, we have that
\begin{align*}
\mathbb P\left(\mathcal L(\phi,\mathcal D_M,p)<\Gamma_K-\varepsilon
\right)\le\mathbb P\left(\left|\mathcal L(\phi,\mathcal D_M,p)-\mathcal E(\phi)
\right|>\varepsilon-\delta\right)+\tilde{c}(K,\delta),
\end{align*}
where $\tilde{c}(K,\delta) \to 0$ as $K\to \infty$. 
\end{corollary}

\noindent Corollary \ref{thm:finitesampleconsistency} is non-trivial for $M\to\infty$ and $K\to\infty$, but does not impose any other relation between the sample numbers $M$ and $K$. 

\section{Relation to previous work}

This work establishes computable lower and upper bounds for the lowest achievable \emph{average} reconstruction error of approximate inverse maps for nonlinear inverse problems. The framework applies to non-injective forward models, including ones with mixed additive and multiplicative noise, and yields finite-sample computable reconstruction guarantees. Related notions of optimal reconstruction have been studied in several settings. Generalized instance optimality and corresponding upper bounds for approximate inverse maps were investigated in \cite{fundamental14} under a generalized null space property (NSP). The generalized NSP in \cite{fundamental14} is equivalent to injectivity of the nonlinear forward model on the signal set $\mathcal{M}_1$, i.e.
$\pi_1\left(\mathcal{N}(F)\right)\cap(\mathcal{M}_1-\mathcal{M}_1)=\{0\}$, see \cite[Section 4]{gottschling2023existence}. Nonlinear and non-injective forward models on $\mathcal{M}_1$ are relevant in the examples considered in this work. A large body of work in optimal recovery theory studies lower and upper bounds for the \emph{worst-case} reconstruction error. Examples include Gelfand widths for noiseless linear inverse problems \cite{pinkus2012n}, best $k$-term approximation \cite{CoDaDe-08}, optimal recovery for set-valued approximate inverse maps on Banach spaces \cite{arestov1986optimal}, and extensions to metric spaces \cite{magaril1991optimal}. For noisy linear inverse problems, \cite{binev2022optimal} introduced the notion of optimal learning and characterized worst-case reconstruction accuracy via the Chebyshev radius. In the setting of linear forward models with additive noise, the Chebyshev radius is closely related to the diameter of the feasible sets considered in the present work. Similarly, \cite{cohen2022optimal} developed a framework for stable nonlinear approximation methods and corresponding worst-case reconstruction guarantees. See also \cite{micchelli1977survey} for an early survey of optimal recovery theory. The work of Traub et al.~\cite{traub1983information} considers the radius of information through a family of admissible approximation sets $S(f,\delta)$ that are nested. In Section \textbf{SM} $6$, we show in metric spaces under the assumptions of this work, that the local radius of information $r(F,x)$ at $x \in \mathcal{M}_1$ from \cite{traub1983information} satisfies $\tfrac{1}{2}\operatorname{diam}(F_y) \le r(F,x) \le \operatorname{diam}(F_y)$, where $y = F(x,0)$ and $\operatorname{diam}(F_y) = \sup_{x,x' \in F_y}d_{\mathcal{X}}(x,x')$. The present work formulates reconstruction in terms of metric and Hausdorff distances on feasible sets and optimal decoders, without requiring a nested family of approximation sets.  Worst-case and average-case reconstruction bounds for normed spaces were also studied in \cite{plaskota1996noisy}. Many of these works consider finite- or infinite-dimensional Banach spaces and derive bounds for worst-case reconstruction errors measured with respect to prescribed norms. The present work complements this literature by computable finite-sample bounds for average reconstruction error nonlinear applicable approximate inverse maps such as stochastic estimators and deep neural networks. This computational perspective is motivated by the gap between approximation-theoretic expressivity and practical reconstruction performance in scientific machine learning \cite{adcock2021gap}. Average-case reconstruction guarantees are relevant in modern data-driven inverse problems, where reconstruction quality is evaluated empirically over large datasets rather than through worst-case adversarial instances and offers a tractable approach to questions raised recently in \cite{burger2024learning}.

\section{Discussion and conclusion}

A limitation of our framework is that the bounds in Theorem~\ref{thm:lowerboundapprx} are evaluated on finite datasets produced by Algorithm~\ref{alg:feasset}. In Section \ref{sec:applications}, we verified the bounds numerically on fluorescence localization microscopy and multi-spectral satellite super-resolution.  However, to extend guarantees beyond a fixed dataset, one needs quantitative control of how the computable average kernel size behaves under increasing sample sizes. Under additional measurability and i.i.d.\ sampling assumptions, Theorem~\ref{thm:convavgalgnon} shows convergence (up to normalization) to the measure-theoretic average kernel size of \cite{gottschling2023existence}, and Corollary~\ref{thm:finitesampleconsistency} identifies half of the average kernel size computed with finite samples as an asymptotic accuracy barrier, whenever empirical errors converge in probability to generalization error. Extending these convergence results to dependent and more realistic sampling regimes, such as exchangeable or ergodic processes, is an important open problem. A second limitation is that Algorithm~\ref{alg:feasset} relies on access to feasible-set samples, which may be restrictive outside supervised settings. The validity of the finite-sample inequalities themselves is distribution-free, and in the linear additive-noise case Theorem~\ref{lem:lemmasym} yields an efficient bound even from a single feasible solution. A future direction could be to develop estimators of average kernel size that do not require ground-truth and measurement pairs, enabling guarantees for self-supervised inverse problems based solely on forward-model consistency.

\section*{Acknowledgments}
N.~M.~Gottschling, D.~Iagaru and J.~Gawlikowski would like to acknowledge the funding provided by the German Aerospace Center from the department EO Data Science at the Earth Observation Center. This work was supported by the Helmholtz Association's Initiative and Networking Fund on the HAICORE (at) FZJ partition. N.~M.~Gottschling was supported by the Laboratory Directed Research and Development Program of Oak Ridge National Laboratory, managed by UT-Battelle, LLC, for the U. S. Department of Energy.

{\footnotesize
\bibliographystyle{abbrv}
\bibliography{references}

@article{micchelli1977survey,
  title={A survey of optimal recovery},
  author={Micchelli, Charles A. and Rivlin, Theodore J.},
  journal={Optimal estimation in approximation theory},
  pages={1--54},
  year={1977},
  publisher={Springer}
}

@INPROCEEDINGS{4558487,
  author={Boufounos, Petros T. and Baraniuk, Richard G.},
  booktitle={{2008 42nd Annual Conference on Information Sciences and Systems}},
  title={1-Bit compressive sensing},
  year={2008},
  pages={16-21},
  doi={10.1109/CISS.2008.4558487}}

@article{fannjiang2020numerics,
  title={The numerics of phase retrieval},
  author={Fannjiang, Albert and Strohmer, Thomas},
  journal={Acta Numerica},
  volume={29},
  pages={125--228},
  year={2020},
  publisher={Cambridge University Press}
}

@article{candes2013phaselift,
  title={ {Phaselift: Exact and stable signal recovery from magnitude measurements via convex programming}},
  author={Candes, Emmanuel J. and Strohmer, Thomas and Voroninski, Vladislav},
  journal={Communications on Pure and Applied Mathematics},
  volume={66},
  number={8},
  pages={1241--1274},
  year={2013},
  publisher={Wiley Online Library}
}

@article{shechtman2015phase,
  title={Phase retrieval with application to optical imaging: a contemporary overview},
  author={Shechtman, Yoav and Eldar, Yonina C. and Cohen, Oren and Chapman, Henry N. and Miao, Jianwei and Segev, Mordechai},
  journal={IEEE signal processing magazine},
  volume={32},
  number={3},
  pages={87--109},
  year={2015},
  publisher={IEEE}
}

@book{adcock2022sparse,
  title={Sparse polynomial approximation of high-dimensional functions},
  author={Adcock, Ben and Brugiapaglia, Simone and Webster, Clayton G.},
  volume={25},
  year={2022},
  publisher={SIAM}
}

@article{candes2012exact,
  title={Exact matrix completion via convex optimization},
  author={Candes, Emmanuel J. and Recht, Benjamin},
  journal={Found. Comput. Math.},
  volume={9},
  number={6},
  pages={717--772},
  year={2009},
  publisher={Springer}
}

@book{pinkus2012n,
  title={N-widths in Approximation Theory},
  author={Pinkus, Allan},
  volume={7},
  year={2012},
  publisher={Springer Science \& Business Media}
}

@article{fastmri19,
  title={{Advancing machine learning for MR image reconstruction with an open competition: Overview of the 2019 fastMRI challenge}},
  author={Knoll, Florian and Murrell, Tullie and Sriram, Anuroop and Yakubova, Nafissa and Zbontar, Jure and Rabbat, Michael and Defazio, Aaron and Muckley, Matthew J. and Sodickson, Daniel K. and Zitnick, C. Lawrence and others},
  journal={Magnetic resonance in medicine},
  volume={84},
  number={6},
  pages={3054--3070},
  year={2020},
  publisher={Wiley Online Library}
}

@article{Arr19,
  title={Solving inverse problems using data-driven models},
  author={Arridge, Simon and Maass, Peter and {\"O}ktem, Ozan and Sch{\"o}nlieb, Carola-Bibiane},
  journal={Acta Numer.},
  volume={28},
  pages={1--174},
  year={2019},
  publisher={Cambridge University Press}
}

@article{fastmri20,
  title={{State-of-the-art Machine Learning {MRI} Reconstruction in 2020: {R}esults of the Second fast{MRI} Challenge}},
  author={Muckley, Matthew J. and Riemenschneider, Bruno and Radmanesh, Alireza and Kim, Sunwoo and Jeong, Geunu and Ko, Jingyu and Jun, Yohan and Shin, Hyungseob and Hwang, Dosik and Mostapha, Mahmoud and others},
  journal={arXiv preprint arXiv:2012.06318},
  year={2020}
}

@article{fastmriData,
  title={{fast{MRI}: An open dataset and benchmarks for accelerated {MRI}}},
  author={Zbontar, Jure and Knoll, Florian and Sriram, Anuroop and Murrell, Tullie and Huang, Zhengnan and Muckley, Matthew J and Defazio, Aaron and Stern, Ruben and Johnson, Patricia and Bruno, Mary and others},
  journal={arXiv preprint arXiv:1811.08839},
  year={2018}
}

@article{Str-18,
	Author = {Strack, Rita},
	Journal = {Nature Methods},
	Number = {5},
	Pages = {309},
	Publisher = {Nature Publishing Group},
	Title = {Imaging: {AI} transforms image reconstruction},
	Volume = {15},
	Year = {2018}}

@article{Bo-18,
	Author = {Zhu, Bo and Liu, Jeremiah Z. and Cauley, Stephen F. and Rosen, Bruce R. and Rosen, Matthew S.},
	Day = {21},
	Journal = {Nature},
	Month = {03},
	Number = {7697},
	Pages = {487},
	Publisher = {Nature Publishing Group},
	Title = {Image reconstruction by domain-transform manifold learning},
	Volume = {555},
	Year = {2018}}

@article{fundamental14,
  title={Fundamental performance limits for ideal decoders in high-dimensional linear inverse problems},
  author={Bourrier, Anthony and Davies, Mike E. and Peleg, Tomer and P{\'e}rez, Patrick and Gribonval, R{\'e}mi},
  journal={IEEE Transactions on Information Theory},
  volume={60},
  number={12},
  pages={7928--7946},
  year={2014},
  publisher={IEEE}
}

@article{CoDaDe-08,
  title={Compressed sensing and best $k$-term approximation},
  author={Cohen, Albert and Dahmen, Wolfgang and DeVore, Ronald},
  journal={Journal of the American mathematical society},
  volume={22},
  number={1},
  pages={211--231},
  year={2009}
}

@article{gottschling2025troublesome,
  title={The Troublesome Kernel: On Hallucinations, No Free Lunches, and the Accuracy-Stability Tradeoff in Inverse Problems},
  author={Gottschling, Nina M and Antun, Vegard and Hansen, Anders C and Adcock, Ben},
  journal={SIAM Review},
  volume={67},
  number={1},
  pages={73--104},
  year={2025},
  publisher={SIAM}
}

@book{FoucartRauhutCSbook,
	Author = {Foucart, Simon and Rauhut, Holger},
	Date-Added = {2019-11-11 17:11:43 +0000},
	Date-Modified = {2019-11-11 17:11:43 +0000},
	Publisher = {Birkhauser},
	Title = {A mathematical introduction to compressive sensing},
	Year = {2013}}

@article{hoff21,
	title={The promise and peril of deep learning in microscopy},
	author={Hoffman, David P and Slavitt, Isaac and Fitzpatrick, Casey A},
	journal={Nature Methods},
	volume={18},
	number={2},
	pages={131--132},
	year={2021},
	publisher={Nature Publishing Group}
}

@article{bel19,
	title={Applications, promises, and pitfalls of deep learning for fluorescence image reconstruction},
	author={Belthangady, Chinmay and Royer, Loic A},
	journal={Nature methods},
	volume={16},
	number={12},
	pages={1215--1225},
	year={2019},
	publisher={Nature Publishing Group}
}

@book{mohri2018foundations,
title={Foundations of machine learning},
author={Mohri, Mehryar and Rostamizadeh, Afshin and Talwalkar, Ameet},
year={2018},
publisher={MIT press}
}

@article{huang2009new,
title={A new total variation method for multiplicative noise removal},
author={Huang, Yu-Mei and Ng, Michael K and Wen, You-Wei},
journal={SIAM Journal on imaging sciences},
volume={2},
number={1},
pages={20--40},
year={2009},
publisher={SIAM}
}

@book{bertero2021introduction,
title={Introduction to inverse problems in imaging},
author={Bertero, Mario and Boccacci, Patrizia and De Mol, Christine},
year={2021},
publisher={CRC press}
}

@article{cgenzel2022solving,
title={Solving inverse problems with deep neural networks-robustness included},
author={Genzel, Martin and Macdonald, Jan and Marz, Maximilian},
journal={IEEE Transactions on Pattern Analysis and Machine Intelligence},
year={2022},
publisher={IEEE}
}

@article{cohen2022optimal,
	title={Optimal stable nonlinear approximation},
	author={Cohen, Albert and Devore, Ronald and Petrova, Guergana and Wojtaszczyk, Przemyslaw},
	journal={Foundations of Computational Mathematics},
	volume={22},
	number={3},
	pages={607--648},
	year={2022},
	publisher={Springer}
}

@article{binev2022optimal,
  title={Optimal learning},
  author={Binev, Peter and Bonito, Andrea and DeVore, Ronald and Petrova, Guergana},
  journal={Calcolo},
  volume={61},
  number={1},
  pages={15},
  year={2024},
  publisher={Springer}
}

@book{bouman2022foundations,
	Address = {Philadelphia, PA},
	Author = {Bouman, Charles A.},
	Date-Added = {2022-09-15 09:59:46 -0700},
	Date-Modified = {2022-09-15 10:00:17 -0700},
	Publisher = {SIAM},
	Title = {{Foundations of Computational Imaging: A Model-Based Approach}},
	Year = {2022}}

@book{barrett2013foundations,
	Author = {Barrett, Harrison H. and Myers, Kyle J.},
	Date-Added = {2022-09-15 09:59:01 -0700},
	Date-Modified = {2022-09-15 09:59:01 -0700},
	Publisher = {John Wiley \& Sons},
	Title = {Foundations of image science},
	Year = {2013}}

@book{adcock2021compressive,
	Address = {Cambridge, UK},
	Author = {Adcock, Ben and Hansen, Anders C.},
	Date-Added = {2022-09-06 12:47:19 -0700},
	Date-Modified = {2022-09-06 12:47:19 -0700},
	Publisher = {Cambridge University Press},
	Title = {{Compressive Imaging: Structure, Sampling, Learning}},
	Year = {2021}}

@article{arestov1986optimal,
	title={Optimal recovery of operators and related problems},
	author={Arestov, Vitalii V.},
	journal={Collection of Papers from the All-Union School on the Theory of Functions},
	pages={3--20},
	year={1986}
}

@article{magaril1991optimal,
	title={Optimal recovery of functionals based on inaccurate data},
	author={Magaril-Il’yaev, Georgii G. and Osipenko, Konstantin Y.},
	journal={Mat. Zametki},
	volume={50},
	number={6},
	pages={85--93},
	year={1991}
}

@book{plaskota1996noisy,
	title={Noisy information and computational complexity},
	author={Plaskota, Leszek},
	volume={95},
	year={1996},
	publisher={Cambridge University Press}
}

@misc{korolev2020inverse,
  author       = {Yury Korolev and Jonas Latz},
  title        = {Inverse Problems},
  note         = {Lecture notes, Michaelmas Term 2020, University of Cambridge},
  year         = {2020},
}

@book{ben2006generalized,
  title={Generalized inverses: theory and applications},
  author={Ben-Israel, Adi and Greville, Thomas NE},
  year={2006},
  publisher={Springer Science \& Business Media}
}

@article{gottschling2023existence,
  title={On the existence of optimal multi-valued decoders and their accuracy bounds for undersampled inverse problems},
  author={Gottschling, Nina Maria and Campodonico, Paolo and Antun, Vegard and Hansen, Anders C},
  journal={arXiv preprint arXiv:2311.16898},
  year={2023}
}

@article{hoeffding1963probability,
  title={Probability inequalities for sums of bounded random variables},
  author={Hoeffding, Wassily},
  journal={Journal of the American statistical association},
  volume={58},
  number={301},
  pages={13--30},
  year={1963},
  publisher={Taylor \& Francis}
}

@article{burger2024learning,
  title={Learning in Image Reconstruction: A Cautionary Tale},
  author={Burger, Martin and Roith, Tim},
  journal={SIAM News, Collections},
  volume={57},
  number={08},
  year={2024}
}

@article{ober2004localization,
  title={Localization accuracy in single-molecule microscopy},
  author={Ober, Raimund J and Ram, Sripad and Ward, E Sally},
  journal={Biophysical journal},
  volume={86},
  number={2},
  pages={1185--1200},
  year={2004},
  publisher={Elsevier}
}

@book{cramer1999mathematical,
  title={Mathematical methods of statistics},
  author={Cram{\'e}r, Harald},
  volume={9},
  year={1999},
  publisher={Princeton university press}
}

@article{zacks1971theory,
  title={The theory of statistical inference},
  author={Zacks, Shelemyahu},
  journal={(No Title)},
  year={1971}
}

@book{rao1973linear,
  title={Linear statistical inference and its applications},
  author={Rao, Calyampudi Radhakrishna and Rao, Calyampudi Radhakrishna and Statistiker, Mathematischer and Rao, Calyampudi Radhakrishna and Rao, Calyampudi Radhakrishna},
  volume={2},
  year={1973},
  publisher={Wiley New York}
}

@book{kay1993fundamentals,
  title={Fundamentals of statistical signal processing: estimation theory},
  author={Kay, Steven M},
  year={1993},
  publisher={Prentice-Hall, Inc.}
}

@article{TrustworthwSR2025,
  author={Donike, Simon and Aybar, Cesar and Gómez-Chova, Luis and Kalaitzis, Freddie},
  journal={IEEE Journal of Selected Topics in Applied Earth Observations and Remote Sensing}, 
  title={Trustworthy Super-Resolution of Multispectral Sentinel-2 Imagery With Latent Diffusion}, 
  year={2025},
  volume={18},
  pages={6940-6952},
  ISSN={2151-1535}
}

@article{adcock2021gap,
  title={The gap between theory and practice in function approximation with deep neural networks},
  author={Adcock, Ben and Dexter, Nick},
  journal={SIAM Journal on Mathematics of Data Science},
  volume={3},
  number={2},
  pages={624--655},
  year={2021},
  publisher={SIAM}
}

@article{borcea2023waveform,
  title={Waveform inversion with a data driven estimate of the internal wave},
  author={Borcea, Liliana and Garnier, Josselin and Mamonov, Alexander V and Zimmerling, J{\"o}rn},
  journal={SIAM Journal on Imaging Sciences},
  volume={16},
  number={1},
  pages={280--312},
  year={2023},
  publisher={SIAM}
}

@article{rudin1992nonlinear,
  title={Nonlinear total variation based noise removal algorithms},
  author={Rudin, Leonid I and Osher, Stanley and Fatemi, Emad},
  journal={Physica D: nonlinear phenomena},
  volume={60},
  number={1-4},
  pages={259--268},
  year={1992},
  publisher={Elsevier}
}

@article{bach2017breaking,
  title={Breaking the curse of dimensionality with convex neural networks},
  author={Bach, Francis},
  journal={Journal of Machine Learning Research},
  volume={18},
  number={19},
  pages={1--53},
  year={2017}
}

@article{beck2022full,
  title={Full error analysis for the training of deep neural networks},
  author={Beck, Christian and Jentzen, Arnulf and Kuckuck, Benno},
  journal={Infinite Dimensional Analysis, Quantum Probability and Related Topics},
  volume={25},
  number={02},
  pages={2150020},
  year={2022},
  publisher={World Scientific}
}

@article{petersen2018optimal,
  title={Optimal approximation of piecewise smooth functions using deep ReLU neural networks},
  author={Petersen, Philipp and Voigtlaender, Felix},
  journal={Neural Networks},
  volume={108},
  pages={296--330},
  year={2018},
  publisher={Elsevier}
}

@article{yarotsky2017error,
  title={Error bounds for approximations with deep ReLU networks},
  author={Yarotsky, Dmitry},
  journal={Neural networks},
  volume={94},
  pages={103--114},
  year={2017},
  publisher={Elsevier}
}

@article{zheng2025inversebench,
  title={Inversebench: Benchmarking plug-and-play diffusion priors for inverse problems in physical sciences},
  author={Zheng, Hongkai and Chu, Wenda and Zhang, Bingliang and Wu, Zihui and Wang, Austin and Feng, Berthy T and Zou, Caifeng and Sun, Yu and Kovachki, Nikola and Ross, Zachary E and others},
  journal={arXiv preprint arXiv:2503.11043},
  year={2025}
}

@book{liu2001monte,
  title={Monte Carlo strategies in scientific computing},
  author={Liu, Jun S and Liu, Jun S},
  volume={10},
  year={2001},
  publisher={Springer}
}

@article{casella2008monte,
  title={Monte Carlo statistical methods},
  author={Casella, George and Robert, C},
  journal={University of Florida},
  year={2008}
}

@article{SR2024Ces,
  author={Aybar, Cesar and Montero, David and Donike, Simon and Kalaitzis, Freddie and Gómez-Chova, Luis},
  journal={IEEE Geoscience and Remote Sensing Letters}, 
  title={A Comprehensive Benchmark for Optical Remote Sensing Image Super-Resolution}, 
  year={2024},
  volume={21},
  pages={1-5}
}

@article{benediktsson2012very,
  title={Very high-resolution remote sensing: Challenges and opportunities [point of view]},
  author={Benediktsson, Jon Atli and Chanussot, Jocelyn and Moon, Wooil M},
  journal={Proceedings of the IEEE},
  volume={100},
  number={6},
  pages={1907--1910},
  year={2012},
  publisher={IEEE}
}

@book{quattrochi1997scale,
  title={Scale in remote sensing and GIS},
  author={Quattrochi, Dale A and Goodchild, Michael F},
  year={1997},
  publisher={CRC press}
}

@article{chen2023large,
  title={Large-scale individual building extraction from open-source satellite imagery via super-resolution-based instance segmentation approach},
  author={Chen, Shenglong and Ogawa, Yoshiki and Zhao, Chenbo and Sekimoto, Yoshihide},
  journal={ISPRS Journal of Photogrammetry and Remote Sensing},
  volume={195},
  pages={129--152},
  year={2023},
  publisher={Elsevier}
}

@article{kong2023super,
  title={Super resolution of historic Landsat imagery using a dual generative adversarial network (GAN) model with CubeSat constellation imagery for spatially enhanced long-term vegetation monitoring},
  author={Kong, Juwon and Ryu, Youngryel and Jeong, Sungchan and Zhong, Zilong and Choi, Wonseok and Kim, Jongmin and Lee, Kyungdo and Lim, Joongbin and Jang, Keunchang and Chun, Junghwa and others},
  journal={ISPRS Journal of Photogrammetry and Remote Sensing},
  volume={200},
  pages={1--23},
  year={2023},
  publisher={Elsevier}
}

@article{atkinson1997choosing,
  title={Choosing an appropriate spatial resolution for remote sensing investigations},
  author={Atkinson, Peter M and Curran, Paul J},
  journal={Photogrammetric Engineering \& Remote Sensing},
  volume={63},
  number={12},
  year={1997}
}

@article{curran1987airborne,
  title={Airborne MSS data to estimate GLAI},
  author={Curran, PJ and Williamson, HD},
  journal={International Journal of Remote Sensing},
  volume={8},
  number={1},
  pages={57--74},
  year={1987},
  publisher={Taylor \& Francis}
}

@article{anwar2020deep,
  title={A deep journey into super-resolution: A survey},
  author={Anwar, Saeed and Khan, Salman and Barnes, Nick},
  journal={ACM computing surveys (CSUR)},
  volume={53},
  number={3},
  pages={1--34},
  year={2020},
  publisher={ACM New York, NY, USA}
}

@article{paszke2019pytorch,
  title={Pytorch: An imperative style, high-performance deep learning library},
  author={Paszke, Adam and Gross, Sam and Massa, Francisco and Lerer, Adam and Bradbury, James and Chanan, Gregory and Killeen, Trevor and Lin, Zeming and Gimelshein, Natalia and Antiga, Luca and others},
  journal={Advances in neural information processing systems},
  volume={32},
  year={2019}
}

@article{mercer1911experimental,
  title={The experimental error of field trials},
  author={Mercer, WB and Hall, AD},
  journal={The Journal of Agricultural Science},
  volume={4},
  number={2},
  pages={107--132},
  year={1911},
  publisher={Cambridge University Press}
}

@article{smith1938empirical,
  title={An empirical law describing heterogeneity in the yields of agricultural crops},
  author={Smith, H Fairfield},
  journal={The Journal of Agricultural Science},
  volume={28},
  number={1},
  pages={1--23},
  year={1938},
  publisher={Cambridge University Press}
}

@article{lee2017unraveling,
  title={Unraveling the thousand word picture: an introduction to super-resolution data analysis},
  author={Lee, Antony and Tsekouras, Konstantinos and Calderon, Christopher and Bustamante, Carlos and Press{\'e}, Steve},
  journal={Chemical reviews},
  volume={117},
  number={11},
  pages={7276--7330},
  year={2017},
  publisher={ACS Publications}
}

@article{von2017three,
  title={Three-dimensional localization of single molecules for super-resolution imaging and single-particle tracking},
  author={von Diezmann, Lexy and Shechtman, Yoav and Moerner, WE},
  journal={Chemical reviews},
  volume={117},
  number={11},
  pages={7244--7275},
  year={2017},
  publisher={ACS Publications}
}

@book{bickel2015mathematical,
  title={Mathematical statistics: basic ideas and selected topics, volumes I-II package},
  author={Bickel, Peter J and Doksum, Kjell A},
  year={2015},
  publisher={Chapman and Hall/CRC}
}

@book{murphy2022probabilistic,
  title={Probabilistic machine learning: an introduction},
  author={Murphy, Kevin P},
  year={2022},
  publisher={MIT press}
}

@article{borden2001mathematical,
  title={Mathematical problems in radar inverse scattering},
  author={Borden, Brett},
  journal={Inverse Problems},
  volume={18},
  number={1},
  pages={R1},
  year={2001},
  publisher={IOP Publishing}
}

@book{craig1986inverse,
  title={Inverse problems in astronomy},
  author={Craig, Ian and Brown, John},
  year={1986},
  publisher={Adam Hilger Ltd., Accord, MA}
}

@article{hornik1989multilayer,
  title={Multilayer feedforward networks are universal approximators},
  author={Hornik, Kurt and Stinchcombe, Maxwell and White, Halbert},
  journal={Neural networks},
  volume={2},
  number={5},
  pages={359--366},
  year={1989},
  publisher={Elsevier}
}

@article{maiorov1999lower,
  title={Lower bounds for approximation by MLP neural networks},
  author={Maiorov, Vitaly and Pinkus, Allan},
  journal={Neurocomputing},
  volume={25},
  number={1-3},
  pages={81--91},
  year={1999},
  publisher={Elsevier}
}

@article{tikhonov1963solution,
  title={Solution of incorrectly formulated problems and the regularization method.},
  author={Tikhonov, Andrei N},
  journal={Sov Dok},
  volume={4},
  pages={1035--1038},
  year={1963}
}

@article{hammernik2018learning,
  title={Learning a variational network for reconstruction of accelerated MRI data},
  author={Hammernik, Kerstin and Klatzer, Teresa and Kobler, Erich and Recht, Michael P and Sodickson, Daniel K and Pock, Thomas and Knoll, Florian},
  journal={Magnetic resonance in medicine},
  volume={79},
  number={6},
  pages={3055--3071},
  year={2018},
  publisher={Wiley Online Library}
}

@article{wang2020deep,
  title={Deep learning for tomographic image reconstruction},
  author={Wang, Ge and Ye, Jong Chul and De Man, Bruno},
  journal={Nature machine intelligence},
  volume={2},
  number={12},
  pages={737--748},
  year={2020},
  publisher={Nature Publishing Group}
}

@article{crafts2025benchmarking,
  title={Benchmarking Diffusion Annealing-Based Bayesian Inverse Problem Solvers},
  author={Crafts, Evan Scope and Villa, Umberto},
  journal={IEEE Open Journal of Signal Processing},
  year={2025},
  publisher={IEEE}
}

@article{colbrook2022difficulty,
  title={The difficulty of computing stable and accurate neural networks: On the barriers of deep learning and Smale’s 18th problem},
  author={Colbrook, Matthew J and Antun, Vegard and Hansen, Anders C},
  journal={Proceedings of the National Academy of Sciences},
  volume={119},
  number={12},
  pages={e2107151119},
  year={2022},
  publisher={National Academy of Sciences}
}

@article{sgouralis2024bnp,
  title={BNP-Track: a framework for superresolved tracking},
  author={Sgouralis, Ioannis and Xu, Lance WQ and Jalihal, Ameya P and Kilic, Zeliha and Walter, Nils G and Press{\'e}, Steve},
  journal={Nature Methods},
  volume={21},
  number={9},
  pages={1716--1724},
  year={2024},
  publisher={Nature Publishing Group US New York}
}

@book{traub1983information,
  title={Information, uncertainty, complexity},
  author={Traub, Joseph Frederick and Wasilkowski, Grzegorz W and Wo{\'z}niakowski, Henryk},
  year={1983},
  publisher={Addison-Wesley Reading, MA}
}
}

\section{Proofs}
We now prove the main results.

\subsection{Proof of Theorem \ref{thm:lowerboundapprx}}

\begin{proof}[Proof of Theorem \ref{thm:lowerboundapprx}]
\noindent Let  $\{y_k\}_{k=1}^K \subset \mathcal{M}_2 = F(\mathcal{M}_1\times\mathcal{E})$. Now obtain $\left\{F_{y_k}^{N(k)}\right\}_{k=1}^K$ from Algorithm \ref{alg:feasset}. For each distinct $y_k \in \mathcal{M}_2$ we obtain a lower approximation to the feasible set $\pi_1(F^{-1}(y_k))$  defined in \eqref{eq:feasset} with $F_{y_k}^{N(k)}= \{x_{k,n}\}_{n=1}^{N(k)}$. Let $\phi: \mathcal{M}_2 \rightrightarrows \mathcal{X}$ be an approximate inverse map. Now using the inequality $(a+b)^p \leq 2^{p-1}(a^p+b^p)$ for $a,b \geq 0$ and $p \in [1,\infty)$, we obtain for fixed $k \in \{1, \dots, K\}$ with $y_k \in \mathcal{M}_2$, $z_k \in \phi(y_k)$ and fixed $n,n' \in \{1, \dots,N(k)\}$ with $n<n'$ and $x_{k,n}, x_{k,n'} \in F_{y_k}^{N(k)}$ that
\begin{align}\label{eq:triangle0}
    \begin{split}
     d_{\mathcal{X}}(x_{k,n}, x_{k,n'})^p &\leq (d_{\mathcal{X}}(x_{k,n},z_k )+d_{\mathcal{X}}(z_k, x_{k,n'}))^p \\
    &\leq 2^{p-1} \left(d_{\mathcal{X}}(x_{k,n},z_k)^p +d_{\mathcal{X}}(z_k,x_{k,n'})^p\right).
    \end{split}
\end{align}
\noindent Taking the supremum over $z_k \in \phi(y_k)$ yields,
\begin{align}\label{eq:triangle1}
    \begin{split}
     d_{\mathcal{X}}(x_{k,n}, x_{k,n'})^p \leq 2^{p-1} \left(d_{\mathcal{X}}^H(x_{k,n},\phi(y_k))^p +d_{\mathcal{X}}^H(\phi(y_k),x_{k,n'})^p\right).
    \end{split}
\end{align}
\noindent Note that $N(k) \ge 2$ and thus $N(k)-1\ge 1$. Now, summing over all $n,n' \in \{1, \dots,N(k)\}$ such that $n<n'$, and thus all $x_{k,n}, x_{k,n'} \in F_{y_k}^{N(k)}$, and multiplying by $\tfrac{2}{N(k)-1}$ yields
\begin{align}\label{eq:triangle2}
\begin{split}
    \frac{2}{N(k)-1} \sum_{1\le n <n' \le N(k)}& d_{\mathcal{X}}(x_{k,n},x_{k,n'})^p \\ &\leq  \frac{2^{p-1}2}{N(k)-1} \sum_{1\le n <n' \le N(k)}  \left(d_{\mathcal{X}}^H(x_{k,n},\phi(y_k))^p +d_{\mathcal{X}}^H(\phi(y_k), x_{k,n'})^p\right) \\
    &= \frac{2^{p-1}}{N(k)-1}\sum_{n=1}^{N(k)} \sum_{n'=1, n' \neq n}^{N(k)} \left(d_{\mathcal{X}}^H(x_{k,n},\phi(y_k))^p +d_{\mathcal{X}}^H(\phi(y_k), x_{k,n'})^p\right)\\ & = \frac{2^{p-1}}{N(k)-1}\sum_{n=1}^{N(k)} \sum_{n'=1}^{N(k)}\left(d_{\mathcal{X}}^H(x_{k,n},\phi(y_k))^p +d_{\mathcal{X}}^H(\phi(y_k), x_{k,n'})^p\right) \\ &- \frac{2^{p-1}}{N(k)-1} \sum_{n=1}^{N(k)}2 d_{\mathcal{X}}^H(x_{k,n},\phi(y_k))^p.
\end{split}
\end{align}
\noindent Now we use the fact that for a constant $c>0$, we have that $\frac{1}{N(k)} \sum_{n=1}^{N(k)} c = c$ and that $\phi(y_k)$ is independent of $n \in \{1, \dots, N(k)\}$ in RHS of the above inequality \eqref{eq:triangle2}, to obtain
\begin{align}
\begin{split}
  \frac{2}{N(k)-1} &\sum_{1\le n <n' \le N(k)}   d_{\mathcal{X}}(x_{k,n} , x_{k,n'})^p \\ & \le \frac{2^{p-1}}{N(k)-1}\sum_{n=1}^{N(k)} \sum_{n'=1}^{N(k)}\left(d_{\mathcal{X}}^H(x_{k,n},\phi(y_k))^p +d_{\mathcal{X}}^H(\phi(y_k), x_{k,n'})^p\right) 
  \\ 
  &- \frac{2^{p-1}}{N(k)-1} \sum_{n=1}^{N(k)}2 d_{\mathcal{X}}^H(x_{k,n},\phi(y_k))^p \\
    & =  \frac{2^{p-1}}{N(k)-1}2N(k)\sum_{n=1}^{N(k)}d_{\mathcal{X}}^H(x_{k,n},\phi(y_k))^p -  \frac{2^{p-1}}{N(k)-1}2\sum_{n=1}^{N(k)}d_{\mathcal{X}}^H(x_{k,n},\phi(y_k))^p \\ 
    &=  \frac{2^{p}(N(k)-1)}{N(k)-1}\sum_{n=1}^{N(k)}d_{\mathcal{X}}^H(x_{k,n},\phi(y_k))^p = 2^{p}\sum_{n=1}^{N(k)}d_{\mathcal{X}}^H(x_{k,n},\phi(y_k))^p.
\end{split}
\end{align}
\noindent Now we sum over all $k \in \{1, \dots, K\}$ and divide by $2^pM=2^p \sum_{k=1}^KN(k)$, to obtain
\begin{align}\label{eq:boundeasy}
    \frac{1}{2^pM} \sum_{k=1}^K   \frac{2}{N(k)-1} \sum_{1\le n <n' \le N(k)}  d_{\mathcal{X}}(x_{k,n}, x_{k,n'})^p \leq  \frac{1}{M} \sum_{k=1}^K \sum_{n=1}^{N(k)} d_{\mathcal{X}}^H(x_{k,n},\phi(y_k))^p.
\end{align}
\noindent In the following we prove for $M = \sum_{k=1}^K N(k)$, $\left\{F_{y_k}^{N(k)}\right\}_{k=1}^K$ and $\mathcal{D}_M = \{(x_m,y_m)\}_{m=1}^M$ from Algorithm \ref{alg:feasset} and for any $\phi: \mathcal{M}_2 \rightrightarrows \mathcal{X}$, that the right hand side of \eqref{eq:boundeasy} satisfies
\begin{align}\label{eq:importantinequality}
  \frac{1}{M} \sum_{k=1}^K \sum_{n=1}^{N(k)} d_{\mathcal{X}}^H(x_{k,n} ,\phi(y_k))^p = \mathcal{L}(\mathcal{D}_M,\phi,p)^p = \frac{1}{M} \sum_{m=1}^M  d_{\mathcal{X}}^H(x_m,\phi(y_m))^p.
\end{align}

\noindent To prove this, we introduce two maps to reassign the summation indices. We define a surjective function that maps the indices in $\{1, \dots , M\}$ to the index of the corresponding feasible set by
\begin{align}\label{eq:mapindiexdown}
\begin{split}
    g: \{1, \dots,M\} & \rightarrow \{1, \dots,K\} \\
    m & \mapsto g(m)= \argmin_{k\in \{1,\dots,K\}} \left(2-\mathds{1}_{0 < m - \sum_{j=0}^{k-1} N(j)}(k)-\mathds{1}_{0\leq\sum_{j=1}^{k} N(j)-m}(k)\right),
\end{split}
\end{align}
\noindent where $N(0)=0$. For any $m \in \{1, \dots,M\}$ we have that $g(m)= k \in \{1, \dots,K\}$. Moreover, as by definition $M=\sum_{k=1}^K N(k)$, we have that $m$ and $k=g(m)$ are such that 
\begin{align}\label{eq:range}
    0 \leq \sum_{j=0}^{k-1} N(j) < m \leq \sum_{j=1}^{k} N(j) \leq M.
\end{align}
\noindent From \eqref{eq:range} for any $k \in \{1, \dots,K\}$ we have that 
\begin{align}
\emptyset \neq g^{-1}(k) = \left(\sum_{j=0}^{k-1} N(j), \sum_{j=1}^{k} N(j)\right]\cap\mathbb{N} \subset \{1, \dots, M\}.
\end{align}
Hence, $g$ is surjective. Now we show that for $k' \neq k$, $g^{-1}(k')\cap g^{-1}(k) = \emptyset$. Let $k' \neq k$, without loss of generality we have that $k < k'$ and $k'=k+1$. Then, we have that $(\sum_{j=0}^{k-1} N(j), \sum_{j=1}^{k} N(j)]\cap (\sum_{j=0}^{k} N(j), \sum_{j=1}^{k+1} N(j)] = \emptyset$ and, thus $g^{-1}(k')\cap g^{-1}(k) = \emptyset$. Now we define a bijective function depending on the function $g$ to reassign the indices of samples to samples allocated to the feasible sets by
\begin{align}\label{eq:mapindices2}
\begin{split}
    u: \{1, \dots,M\}  & \rightarrow \bigcup_{k=1}^K \{k\} \times \{1, \dots, N(k)\}  \\
    m &\mapsto u(m) = \left(g(m), m - \sum_{k=1}^{g(m)-1}N(k)\right).
\end{split}
\end{align}
\noindent Note that $u:  \{1, \dots,M\} \rightarrow \bigcup_{k=1}^K \{k\} \times \{1, \dots, N(k)\}$ is surjective, as the function $g$ defined in \eqref{eq:mapindiexdown} is surjective and as, by definition, we have that
\begin{align*}
\{1, &\dots, N(g(m))\} \\
&=\left\{n\in \mathbb{N}: \quad n=m' - \sum_{k=1}^{g(m)-1}N(k) \text{ for } m' \in \left(\sum_{k=0}^{g(m)-1} N(k), \sum_{k=1}^{g(m)} N(k)\right]\cap\mathbb{N}\right\}.
\end{align*}
\noindent Moreover, the function $u$ is injective, as for $m \neq m' \in \{1, \dots,M\}$, $u(m) \neq u(m')$. Now as $u:  \{1, \dots,M\}  \rightarrow \bigcup_{k=1}^K \{k\} \times \{1, \dots, N(k)\}$ is bijective, we can reassign the indices in the summation over $m \in \{1, \dots,M\}$ with $u$. Then, we obtain
\begin{align}\label{eq:loss23}
\begin{split}
M\mathcal{L}(\mathcal{D}_M,\phi,p)^p &= \sum_{m=1}^M d_{\mathcal{X}}^H(x_m,\phi(y_m))^p = \sum_{m=1}^M  d_{\mathcal{X}}^H(x_{u(m)},\phi(y_{u(m)}))^p \\
&= \sum_{k \in \{1, \dots,K\}} \sum_{n=1}^{N(k)} d_{\mathcal{X}}^H(x_{k,n},\phi(F(x_{k,n},e_{k,n})))^p.
\end{split}
\end{align}
\noindent Dividing by $M= \sum_{k=1}^K N(k)$ yields \eqref{eq:importantinequality}. Combining \eqref{eq:boundeasy} and \eqref{eq:importantinequality} and taking the $p$-th root yields the desired inequality. \newline

\noindent Now define for each $k$
$$\Theta_k := \operatorname*{arg\,min}_{z\in\mathcal{X}} \sum_{n=1}^{N(k)} d_{\mathcal{X}}(x_{k,n},z)^p,$$ which is nonempty, which we prove below. Now we define a map 
\begin{align*}\label{eq:optmap}
\theta: \mathcal{M}_2 \rightrightarrows \mathcal{X},
\end{align*}
\noindent by
$$\theta(y)=
\begin{cases}
\Theta_k, & y=y_k \text{ for some }k\in\{1,\dots,K\},\\
x_0, & y \in \mathcal{M}_2\setminus\{y_1,\dots,y_K\},
\end{cases}$$ 
\noindent for an arbitrary fixed $x_0 \in \mathcal{X}$. Then $\theta$ attains the infimum in $\inf_{\phi:\mathcal{M}_2\rightrightarrows\mathcal{X}}\mathcal{L}(\mathcal{D}_M,\phi,p)$. For fixed $k \in \{1, \dots, K\}$ now define 
\begin{align*}
f_k: \mathcal{X} & \rightarrow [0,\infty) \\
z & \mapsto f_k(z):= \frac{1}{N(k)}\sum_{n=1}^{N(k)}d_{\mathcal{X}}(x_{k,n},z)^p.
\end{align*}
\noindent We define
\begin{align*}
r_k^p = \max_{n,n' \in \{1, \dots, N(k)\}} d_{\mathcal{X}}(x_{k,n},x_{k,n'})^p.
\end{align*}
\noindent In the following we prove that 
\begin{enumerate}
\item[$(I)$] $f_k$ is continuous,
\item[$(II)$] $\argmin_{z \in \mathcal{X}} f_k(z) = \argmin_{z \in \mathcal{B}(x_{k,n},2r_k)} f_k(z)$ for all $n \in \{1, \dots, N(k)\}$.
\end{enumerate}
\noindent In order to prove $(I)$ - that $f_k$ is continuous - we first prove that $f_k^{1/p}$ is continuous. By Minkowski's inequality for the counting measure and $p \in [1,\infty)$, we obtain
\begin{align}
\begin{split}
f_k(z)^{1/p} &= \left(\frac{1}{N(k)} \sum_{n=1}^{N(k)}d_{\mathcal{X}}(x_{k,n},z)^p\right)^{1/p} \\
&\leq \left( \frac{1}{N(k)}\sum_{n=1}^{N(k)}d_{\mathcal{X}}(x_{k,n},z')^p\right)^{1/p} + \left( \frac{1}{N(k)}\sum_{n=1}^{N(k)}d_{\mathcal{X}}( z ',z)^p\right)^{1/p} \\
& \leq f_k(z')^{1/p} + d_{\mathcal{X}}( z ',z).
\end{split}
\end{align}
\noindent Reversing the role of $z,z' \in \mathcal{X}$ and subtracting the resulting inequalities leads to
\begin{align}
|f_k(z)^{1/p}-f_k(z')^{1/p}| \leq d_{\mathcal{X}}(z ',z).
\end{align}
\noindent Thus, $f_k^{1/p}$ is continuous and, hence, $f_k = (f_k^{1/p})^p$ is continuous. Now we proceed to prove $(II)$. Let $n \in \{1, \dots, N(k)\}$. Fix $z \in \mathcal{X} \setminus \mathcal{B}(x_{k,n},2r_k)$. Then, for $n' \in \{1, \dots, N(k)\}$ by the reverse triangle inequality we have that
\begin{align}
d_{\mathcal{X}}(x_{k,n'},z) \geq d_{\mathcal{X}}(z,x_{k,n})-d_{\mathcal{X}}(x_{k,n},x_{k,n'}) > 2r_k -r_k =r_k.
\end{align}
\noindent Summing over $n' \in \{1, \dots, N(k)\}$ and dividing by $N(k)$, proves that $f_k(z) \geq r_k^p$ for $z \in \mathcal{X}\setminus \mathcal{B}(x_{k,n},2r_k)$. On the other hand, $f_k(x_{k,n}) \leq r_k^p$, which proves that $f_k$ cannot have minimizers outside the ball $\mathcal{B}(x_{k,n},2r_k)$ for any $n \in \{1, \dots, N(k)\}$. This proves $(II)$. Now recall the that $\theta(y_k) = \argmin_{z \in \mathcal{B}(x_{k,n},2r_k)}f_k(z)$ for any $n \in \{1, \dots,N(k)\}$. As the ball in the quotient space $\mathcal{B}([x_{k,n}],2r_k)\subset \mathcal{X}/\sim$ is closed and bounded with respect to the pseudo metric $d_{\mathcal{X}}$ on $\mathcal{X}$, it is compact in the metric space on the quotient set $\mathcal{X}/\sim$ by Heine-Borel. Thus, as $f_k$ is continuous by the Extreme Value Theorem there exists a minimum that is attained on $\mathcal{B}([x_{k,n}],2r_k)\subset \mathcal{X}/\sim$. Hence, the map $\theta$ defined in \eqref{eq:optmap} has non-empty values. We now prove that $\theta$ is an optimal map. Let $\phi: \mathcal{M}_2 \rightrightarrows \mathcal{X}$ be a map, then by the minimizing definition of $\theta$ and as dividing by $N(k)$ does not change the minimizing property, we have that
\begin{align*}
 \sum_{n=1}^{N(k)}d_{\mathcal{X}}^H(x_{k,n},\theta(F(x_{k,n},e_{k,n})))^p \leq  \sum_{n=1}^{N(k)}d_{\mathcal{X}}^H(x_{k,n},\phi(F(x_{k,n},e_{k,n})))^p.
\end{align*}
\noindent Summing over $k \in \{1, \dots,K\}$ and dividing by $M = \sum_{k=1}^KN(k)$ yields,
\begin{align*}
\frac{1}{M}\sum_{k=1}^K \sum_{n=1}^{N(k)}&d_{\mathcal{X}}^H(x_{k,n},\theta(F(x_{k,n},e_{k,n})))^p \\ 
&\leq \frac{1}{M}\sum_{k=1}^K\sum_{n=1}^{N(k)}d_{\mathcal{X}}^H(x_{k,n},\phi(F(x_{k,n},e_{k,n})))^p.
\end{align*}
\noindent Now using the bijective index map $u: \{1, \dots,M\} \rightarrow \bigcup_{k=1}^K \{k\} \times \{1, \dots, N(k)\}$, taking the $p$-th root and, then, taking the infimum over maps $\phi: \mathcal{M}_2 \rightrightarrows \mathcal{X}$ yields
\begin{align*}
&\left(\frac{1}{M}\sum_{m=1}^M d_{\mathcal{X}}^H(x_{m},\theta(F(x_m,e_m)))^p\right)^{1/p} \\
&\leq \inf_{\phi: \mathcal{M}_2 \rightrightarrows \mathcal{X}}\left(\frac{1}{M}\sum_{m=1}^Md_{\mathcal{X}}^H(x_{m},\phi(F(x_{m},e_{m})))^p\right)^{1/p} = \inf_{\phi: \mathcal{M}_2 \rightrightarrows \mathcal{X}} \mathcal{L}(\mathcal{D}_M, \phi,p).
\end{align*}
\noindent The reverse inequality trivially holds and thus
\begin{align*}
\left(\frac{1}{M}\sum_{m=1}^Md_{\mathcal{X}}^H(x_{m},\theta(F(x_m,e_m)))^p\right)^{1/p} =\inf_{\phi: \mathcal{M}_2 \rightrightarrows \mathcal{X}} \mathcal{L}(\mathcal{D}_M, \phi,p).
\end{align*}
\noindent By the minimization property of $\theta$ we have that for any $n' \in \{1, \dots, N(k)\}$ that
\begin{align*}
\sum_{n=1}^{N(k)}d_{\mathcal{X}}^H(x_{k,n},\theta(F(x_{k,n},e_{k,n})))^p \leq \sum_{n=1}^{N(k)}d_{\mathcal{X}}(x_{k,n},x_{k,n'})^p.
\end{align*}
\noindent Summing over $n' \in \{1, \dots, N(k)\}$, such that $n'\neq n$ and dividing by $N(k)-1$ yields that
\begin{align*}
 \sum_{n=1}^{N(k)}d_{\mathcal{X}}^H(x_{k,n},\theta(F(x_{k,n},e_{k,n})))^p \leq \frac{1}{N(k)-1} \sum_{n=1}^{N(k)}\sum_{n'=1, n' \neq n}^{N(k)}d_{\mathcal{X}}(x_{k,n},x_{k,n'})^p.
\end{align*}
\noindent Summing over $k \in \{1, \dots,K\}$, dividing by $M$ and using the bijective index map $u$ on the left hand side of the inequality yields that
\begin{align*}
\frac{1}{M}\sum_{m=1}^Md_{\mathcal{X}}^H(x_{m},\theta(F(x_m,e_m)))^p \leq \frac{1}{M}\sum_{k=1}^K \frac{2}{N(k)-1} \sum_{1 \le n < n'\le N(k)}d_{\mathcal{X}}(x_{k,n},x_{k,n'})^p.
\end{align*}
\noindent Thus, taking the $p$-th root and using the definition of the average kernel size and the above, we obtain
\begin{align*}
			\frac{1}{2}\operatorname{Kersize}(F,\mathcal{M}_1,\mathcal{E},p)_K
			 \leq  \inf_{\phi: \mathcal{M}_2 \rightrightarrows \mathcal{X}} \mathcal{L}(\mathcal{D}_M, \phi,p)\leq \operatorname{Kersize}(F,\mathcal{M}_1,\mathcal{E},p)_K.
\end{align*}
\noindent In Section \textbf{SM} $1$, we show that the lower accuracy bound is sharp. In particular, we show with a linear algebra example that for a specific forward model, dataset and set of noise as well as approximate inverse map the lower accuracy bound is attained. 
\end{proof}

\subsection{Proof of Theorem \ref{lem:lemmasym}}

\begin{proof}
\noindent The proof of Theorem \ref{lem:lemmasym} closely follows the proof of Theorem \ref{thm:lowerboundapprx}. We start with proving part $(i)$. We merely change the application of the triangle inequality in \eqref{eq:triangle1} to obtain the desired result. In particular, using the inequality $\|x+x'\|^p \leq 2^{p-1}(\|x\|^p+\|x'\|^p)$, we obtain for fixed $m \in \{1, \dots, M'\}$ with $y_m \in \mathcal{M}_2$. Let $e_{m} \in \mathcal{E}$ be given by $e_{m}= y_m-F(x_{m},0)$ and note that $y_m = F(x_{m},e_{m})$. Now define $v_{m} = \pi_1(P_{\mathcal{N}(F)}(x_{m},e_{m}))$ and $v_{m}^{\perp} = \pi_1(P_{\mathcal{N}^{\perp}(F)}(x_{m},e_{m}))$ and by the properties of the Moore-Penrose inverse, see Section \textbf{SM} $4$, we have that $x_{m'}= v_{m}^{\perp}-v_{m} \in \pi_1(F^{-1}(y_m))$ with $m'= m'(m)=M'+m$. Then, we have that
\begin{align}\label{eq:triangle100}
    \begin{split}
      \| 2 v_{m}\|^p &=  \|x_{m} -\phi(y_m) +\phi(y_m) - (v_{m}^{\perp}-v_{m})\|^p  \\
      &\leq 2^{p-1} \left(\|x_{m} -\phi(y_m)\|^p +\|\phi(y_m) - x_{m'}\|^p\right).
      \end{split}
\end{align}
\noindent Averaging over $m \in \{1, \dots,M'\}$ yields
\begin{align}\label{eq:triangle10}
     \frac{2^p}{M'} \sum_{m=1}^{M'} \| v_{m}\|^p  \leq  \frac{2^{p-1}}{M'} \sum_{m=1}^{M'} \left(\|x_{m} -\phi(y_m)\|^p +\|\phi(y_m) - x_{m'(m)}\|^p\right).
\end{align}
\noindent Reassigning the indices with $m': \{1, \dots, M'\} \rightarrow \{1, \dots,M\}$ where $m'(m)=M'+m$ yields
\begin{align}\label{eq:triangle101}
     \frac{2^p}{M'} \sum_{m=1}^{M'} \| v_{m}\|^p  \leq  \frac{2^{p}}{M} \sum_{m=1}^{M} \|x_{m} -\phi(y_m)\|^p.
\end{align}
\noindent To prove part $(ii)$ the analogous steps as in the proof of Theorem \ref{thm:lowerboundapprx} can be used. Here the approximate feasible sets need to be replaced by $\{x_m, x_m-2P_{\mathcal{N}(F)}x_m\}$ for each $m \in \{1, \dots,M'\}$.
\end{proof}

\subsection{Proofs of Theorem \ref{thm:convavgalgnon}}

\begin{proof}[Proof of Theorem \ref{thm:convavgalgnon}]
For $F_*\mu$-a.e. $y \in \mathcal{M}_2$ define the probability measure $\tilde{\mu}^{y} = \mu^y \times \mu^y$. Then, the inner integrals of the average kernel size are given by
\begin{align}\label{eq:kersizegudd2}
		\operatorname{kersize}^\text{a}(F,\mathcal{M}_1, \mathcal{E}, p)
        &= \Bigg( \int_{\mathcal{M}_2} \int_{F^{-1}(y)}  \int_{F^{-1}(y)} d_{\mathcal{X}}(x,x')^p \, d\tilde{\mu}^y((x,e),(x',e')) \ d(F_*\mu)(y) \Bigg)^\frac{1}{p} \nonumber\\  
        &= \Bigg( \int_{\mathcal{M}_2} \mathbb{E}_{\tilde{\mu}^y}\left(d_{\mathcal{X}}(x,x')^p\right)  \ d(F_*\mu)(y) \Bigg)^\frac{1}{p}.
\end{align}
\noindent Now we transform the measure $F_*\mu$ of the outer integral into a probability measure by factoring out $0<(F_*\mu)(\mathcal{M}_2) <\infty$, leading to $\mu_F := \tfrac{1}{F_*\mu(\mathcal{M}_2)} F_*\mu$, to obtain
\begin{align*}
		\operatorname{kersize}^\text{a}(F,\mathcal{M}_1, \mathcal{E}, p)^p
        &= (F_*\mu)(\mathcal{M}_2) \int_{\mathcal{M}_2} \mathbb{E}_{\tilde{\mu}^y}\left(d_{\mathcal{X}}(x,x')^p\right)  \ \tfrac{1}{F_*\mu(\mathcal{M}_2)} d(F_*\mu)(y) \nonumber\\ 
        &= (F_*\mu)(\mathcal{M}_2) \mathbb{E}_{\mu_F}\left(\mathbb{E}_{\tilde{\mu}^y}\left(d_{\mathcal{X}}(x,x')^p\right)\right).
\end{align*}
\noindent Fix $\delta = \delta'/2 >0$. Define $v_{\mathrm{max}} = \sup_{y \in \mathcal{M}_2}\sup_{x,x' \in F_y} d_{\mathcal{X}}(x,x')^p$. Since, $\mathcal{M}_1$ is compact, its diameter with respect to $d_{\mathcal{X}}$ is finite and thus $v_{\mathrm{max}}<\infty$. As $\{y_k\}_{k=1}^K \subset \mathcal{M}_2$ are sampled independently with respect to $\mu_F$ and as $\tilde{\mu}^{y_k}$ is a probability measure, the expectations $\mathbb{E}_{\tilde{\mu}^{y_k}}(d_{\mathcal{X}}(x,x')^p)$ are independent random variables that are bounded by $v_{\mathrm{max}}$. Now we use Hoeffding's inequality \cite{hoeffding1963probability} for $\delta >0$ and $K \in \mathbb{N}$,
\begin{align}\label{eq:convcltbig1}
    \mathbb{P}\left( \left|\tfrac{1}{K} \sum_{k=1}^K  \mathbb{E}_{\tilde{\mu}^{y_k}}(d_{\mathcal{X}}(x,x')^p)  -  \mathbb{E}_{\mu_F}\left(\mathbb{E}_{\tilde{\mu}^{y}}(d_{\mathcal{X}}(x,x')^p)\right)\right| \geq \delta \right) \leq 2\exp\left(-2K\tfrac{\delta^2}{v_{\mathrm{max}}^2}\right).
\end{align}
\noindent Now let $\delta>0$ and fix $k\in\{1,\ldots,K\}$. Define $h((x,e),(x',e')):=d_{\mathcal{X}}(x,x')^p$.
Since $0\leq h((x,e),(x',e')) \leq v_{\mathrm{max}}$, $h$ is bounded and symmetric. Moreover, following \cite[Section 5q]{hoeffding1963probability} we have that
\begin{align*}
\tfrac{2}{N(K)(N(K)-1)}\sum_{1\le n<n'\le N(K)}d_{\mathcal{X}}(x_{k,n},x_{k,n'})^p=\tfrac{1}{N(K)(N(K)-1)} \sum_{n \neq n'}d_{\mathcal{X}}(x_{k,n},x_{k,n'})^p,
\end{align*}
is a $U$-statistic with $r=2$ associated to the bounded symmetric kernel $h$. Hence, as $\{(x_{k,n},e_{k,n})\}_{n=1}^{N(K)}$ are sampled independently with respect to the disintegration of measure $\mu^{y_k}$, by Hoeffding's inequality for bounded $U$-statistics \cite[Section 5a]{hoeffding1963probability} we obtain,
\begin{align*}
\mathbb{P}\Bigg(
\Bigg|\tfrac{1}{N(K)(N(K)-1)}\sum_{n \neq n'} d_{\mathcal{X}}(x_{k,n},x_{k,n'})^p&-\mathbb{E}_{\tilde{\mu}^{y_k}}
\big(d_{\mathcal{X}}(x,x')^p\big)\Bigg|\geq \delta
\Bigg)
\\
&\leq2\exp\left(-2\Big\lfloor\tfrac{N(K)}{2}\Big\rfloor\tfrac{\delta^2}{v_{\mathrm{max}}^2}
\right).
\end{align*}
\noindent Now define the events
\begin{align*}
A_k := &\left\{\left|\tfrac{1}{N(K)(N(K)-1)}\sum_{n \neq n'} d_{\mathcal{X}}(x_{k,n},x_{k,n'})^p - \mathbb{E}_{\tilde{\mu}^{y_k}}(d_{\mathcal{X}}(x,x')^p) \right|\geq \delta\right\} \\
B_K :=& \left\{\left|\tfrac{1}{K} \sum_{k=1}^K  \mathbb{E}_{\tilde{\mu}^{y_k}}(d_{\mathcal{X}}(x,x')^p)  -  \mathbb{E}_{\mu_F}\left(\mathbb{E}_{\tilde{\mu}^{y}}(d_{\mathcal{X}}(x,x')^p)\right)\right| \geq \delta\right\}
\end{align*}
Then, by the union bound we have that
\begin{align}\label{eq:thismaybeok2}
    \mathbb{P}\Bigg(\left(\bigcup_{k=1}^KA_k\right) \bigcup B_K\Bigg)\leq K 2\exp\left(-2\Big\lfloor\tfrac{N(K)}{2}\Big\rfloor\tfrac{\delta^2}{v_{\mathrm{max}}^2}\right) + 2\exp\left(-2K\tfrac{\delta^2}{v_{\mathrm{max}}^2}\right)
\end{align}
Equivalently using de Morgan's laws this yields,
\begin{align}\label{eq:convclt1big1}
    \mathbb{P}\Bigg(\left(\bigcap_{k=1}^KA_k^C\right) \bigcap B_K^C\Bigg)
    &\geq 1- \left(K 2\exp\left(-2\Big\lfloor\tfrac{N(K)}{2}\Big\rfloor\tfrac{\delta^2}{v_{\mathrm{max}}^2}\right) + 2\exp\left(-2K\tfrac{\delta^2}{v_{\mathrm{max}}^2}\right)\right)\nonumber\\  &= 1 - \tilde{c}(K, \delta).
\end{align}
Note that as $N(K)$ is increasing with $K$, we have that $\tilde{c}(K, \delta) \to 0$ as $K \to \infty$. Thus, $1 - \tilde{c}(K, \delta)>0$ for $K\in \mathbb{N}$ sufficiently large. With probability greater than or equal to $ 1-\tilde{c}(K, \delta)$ we have at the same time that for all $k = 1, \ldots K$ that
\begin{align}\label{eq:coolstuff10}
   \left|\tfrac{1}{N(K)(N(K)-1)}\sum_{n \neq n'} d_{\mathcal{X}}(x_{k,n},x_{k,n'})^p -  \mathbb{E}_{\tilde{\mu}^{y_k}}(d_{\mathcal{X}}(x,x')^p)\right| \leq \delta,
\end{align}
and that
\begin{align}\label{eq:coolstuff20}
    \left|\tfrac{1}{K} \sum_{k=1}^K  \mathbb{E}_{\tilde{\mu}^{y_k}}(d_{\mathcal{X}}(x,x')^p)  - \mathbb{E}_{\mu_F}\left(\mathbb{E}_{\tilde{\mu}^{y}}(d_{\mathcal{X}}(x,x')^p)\right) \right| \leq \delta.
\end{align}
As \eqref{eq:coolstuff10} and \eqref{eq:coolstuff20} hold at the same time with probability greater than or equal to $1-\tilde{c}(K, \delta)$, for each $k \in \{1, ...K\}$ we can use the triangle inequality, to obtain
\begin{align}\label{eq:bound001}
        \left|\tfrac{1}{K} \sum_{k=1}^K   \tfrac{1}{N(K)(N(K)-1)}\sum_{n\neq n'} d_{\mathcal{X}}(x_{k,n},x_{k,n'})^p - \mathbb{E}_{\mu_F}\left(\mathbb{E}_{\tilde{\mu}^{y}}(d_{\mathcal{X}}(x,x')^p)\right) \right|\leq 2\delta
\end{align}
with probability greater than or equal to $ 1-\tilde{c}(K, \delta)$. Now as $\delta = \delta'/2$, we obtain 
\begin{equation}
   \left|\operatorname{Kersize}(F,\mathcal{M}_1,\mathcal{E},p)_K^p- \tfrac{1}{(F_*\mu)(\mathcal{M}_2)}\operatorname{kersize}^\text{a}(F, \mathcal{M}_1, \mathcal{E},p)^p\right| \leq \delta',
\end{equation}
with probability greater than or equal to $1-c(K, \delta')=1-\tilde{c}(K, \delta'/2)$.

\noindent To obtain explicit sample complexity bounds, fix $\delta'>0$ and $\eta\in(0,1)$ and require that the total error probability satisfies $c(K,\delta')\le \eta$. From
\begin{align*}
c(K,\delta')
= 2K \exp\!\left(-\Big\lfloor \tfrac{N(K)}{2}\Big\rfloor \tfrac{\delta'^2}{2v_{\max}^2}\right)
+ 2\exp\!\left(-K\tfrac{\delta'^2}{2v_{\max}^2}\right),
\end{align*}
we bound each term by $\eta/2$. First, we have that $2\exp\!\left(-K\tfrac{\delta'^2}{2v_{\max}^2}\right)\le \tfrac{\eta}{2}$ 
is equivalent to $K \ge \tfrac{2v_{\max}^2}{\delta'^2}\log\!\left(\tfrac{4}{\eta}\right)$. Next, $2K \exp\!\left(-\Big\lfloor \tfrac{N(K)}{2}\Big\rfloor \tfrac{\delta'^2}{2v_{\max}^2}\right)\le \tfrac{\eta}{2}$
is equivalent to $\exp\!\left(-\Big\lfloor \tfrac{N(K)}{2}\Big\rfloor \tfrac{\delta'^2}{2v_{\max}^2}\right)
\le \tfrac{\eta}{4K}$. Taking logarithms yields $-\Big\lfloor \tfrac{N(K)}{2}\Big\rfloor \tfrac{\delta'^2}{2v_{\max}^2}
\le \log\!\left(\tfrac{\eta}{4K}\right)$, and hence $N(K)
\ge \tfrac{2v_{\max}^2}{\delta'^2}\log\!\left(\tfrac{4K}{\eta}\right)$.
Under these conditions, the approximation error is bounded by $\delta'$ with probability at least $1-\eta$.
\end{proof}

\begin{proof}[Proof of Lemma \ref{lem:avererrconv}]
Define $Z(x,e):=d_{\mathcal{X}}\Bigl(x,\phi(F(x,e))\Bigr)^p$, which is a random variable, as a composition of measurable functions. Since $\tilde{\mu}$ is a probability measure, we may write
\begin{align}
\int_{\mathcal{M}_1\times\mathcal{E}} d_{\mathcal{X}}\Bigl(x,\phi(F(x,e))\Bigr)^p
\,d\tilde{\mu}(x,e) = \mathbb{E}_{\tilde{\mu}}[Z].
\end{align}
Moreover, by definition of $u_{\max}$, we have that $0\le Z(x,e)\le u_{\max}$ for all $(x,e)\in\mathcal{M}_1\times\mathcal{E}$. Since $\{(x_m,e_m)\}_{m=1}^M$ are i.i.d. samples from $\tilde{\mu}$, the random variables $Z_m:=Z(x_m,e_m)$ for $m \in \{1,\ldots, M\}$ are i.i.d., as sample-wise transformations of i.i.d. variables, and bounded in $[0,u_{\max}]$. Applying Hoeffding's inequality yields
\begin{align}
\mathbb{P}\left(\left|\tfrac{1}{M}\sum_{m=1}^M Z_m-\mathbb{E}_{\tilde{\mu}}[Z]\right|
\ge \delta\right)\le2\exp\left(-\tfrac{2M\delta^2}{u_{\max}^2}\right).
\end{align}
Rewriting this inequality gives
\begin{align}
\mathbb{P}\Bigg(\Bigg|\tfrac{1}{M}\sum_{m=1}^Md_{\mathcal{X}}\Bigl(x_m,\phi(F(x_m,e_m))\Bigr)^p-\mathbb{E}_{\tilde{\mu}}[Z]\Bigg|\le \delta\Bigg)\ge 1-c(M,\delta),
\end{align}
with $c(M,\delta)=2\exp\left(-\tfrac{2M\delta^2}{u_{\max}^2}\right)$ which proves the claim.
\end{proof}

\begin{proof}[Proof of Corollary \ref{thm:finitesampleconsistency}]
Let $K,M \in \mathbb{N}$ and define
\begin{align*}
\Gamma:=\frac12\left(\frac{\operatorname{kersize}^{\mathrm a}(F,\mathcal M_1,\mathcal E,p)^p}{(F_*\mu)(\mathcal M_2)}\right)^{1/p}.
\end{align*}
By \cite[Theorem 3.9]{gottschling2023existence}, every measurable approximate inverse map $\phi:\mathcal M_2\to\mathcal X$ satisfies $\Gamma
\le \mathcal E(\phi)$. Fix $\varepsilon>\delta'>0$ and let $\delta = (2\delta')^p>0$. By Theorem~\ref{thm:convavgalgnon}, with probability greater than or equal to
$1-c(K,\delta)$,
\begin{align*}
\left|\operatorname{Kersize}(F,\mathcal M_1,\mathcal E,p)_K^p
-\frac{\operatorname{kersize}^{\mathrm a}(F,\mathcal M_1,\mathcal E,p)^p
}{(F_*\mu)(\mathcal M_2)}\right|\le\delta.
\end{align*}
Define the above event as 
\begin{align*}
C_{K,\delta}:=\left\{\left|\operatorname{Kersize}(F,\mathcal M_1,\mathcal E,p)_K^p
-\frac{\operatorname{kersize}^{\mathrm a}(F,\mathcal M_1,\mathcal E,p)^p
}{(F_*\mu)(\mathcal M_2)}\right|\le\delta\right\},
\end{align*}
and note that $\mathbb{P}(C_{K,\delta}) \geq 1-c(K,\delta)$. On the event $C_{K,\delta}$ we have
\begin{align*}
\operatorname{Kersize}(F,\mathcal M_1,\mathcal E,p)_K^p
\le
\frac{\operatorname{kersize}^{\mathrm a}(F,\mathcal M_1,\mathcal E,p)^p}{(F_*\mu)(\mathcal M_2)}+\delta.
\end{align*}
Taking $p$-th roots and using $(a+b)^{1/p}\le a^{1/p}+b^{1/p}$ yields
\begin{align*}
\Gamma_K=\tfrac12\,\operatorname{Kersize}(F,\mathcal M_1,\mathcal E,p)_K
\le
\Gamma+\tfrac12\,\delta^{1/p}.
\end{align*}
Since $\Gamma\le \mathcal E(\phi)$, it follows that on $C_{K,\delta}$, $\Gamma_K\le \mathcal E(\phi)+\tfrac12\,\delta^{1/p}$. Now define the event
\begin{align*}
A_{M,K} :=\left\{\mathcal L(\phi,\mathcal D_M,p)<\Gamma_K-\varepsilon\right\}.
\end{align*}
On the event $A_{M,K} \cap C_{K,\delta}$, we obtain $\mathcal L(\phi,\mathcal D_M,p) < \mathcal E(\phi)+\delta'-\varepsilon$, and since $\delta' = \tfrac12\,\delta^{1/p}$,
\begin{align*}
\mathcal E(\phi)-\mathcal L(\phi,\mathcal D_M,p)> \varepsilon-\delta'.
\end{align*}
Therefore,
\begin{align*}
A_{M,K}\subseteq \left(A_{M,K} \cap C_{K,\delta}\right)\bigcup A_{M,K} \cap C_{K,\delta}^C = \left\{\left|\mathcal L(\phi,\mathcal D_M,p)-\mathcal E(\phi)\right|>\varepsilon-\delta'\right\}\cup C_{K,\delta}^C,
\end{align*}
where from Theorem~\ref{thm:convavgalgnon} we have that $\mathbb P(C_{K,\delta}^C) \le c(K,\delta) = c(K,(2\delta')^p)$. Applying the union bound yields
\begin{align*}
\mathbb P(A_{M,K})\le \mathbb P\left(\left|\mathcal L(\phi,\mathcal D_M,p)
-\mathcal E(\phi)\right|>\varepsilon-\delta'\right)+c(K,(2\delta')^p).
\end{align*}
Finally, since $\left|\mathcal L(\phi,\mathcal D_M,p)-\mathcal E(\phi)\right|\to0$ in probability as $M\to\infty$, and $\tilde{c}(K,\delta')=c(K,(2\delta')^p)\to0$ as $K\to\infty$, we conclude that
\begin{align*}
\mathbb P\left(\mathcal L(\phi,\mathcal D_M,p)<\Gamma_K-\varepsilon\right)\to0.
\end{align*}
\end{proof}

\end{document}